\newtheorem{thm}{Theorem}
\newtheorem{lem}{Lemma}
\newtheorem*{ack}{Acknowledgements}
\numberwithin{equation}{section}
\begin{document}

\title{On the saturation number for cubic surfaces}

\author{Yuchao Wang}

\address{
School of Mathematics, Shandong University, Jinan, 250100, China}

\email{yuchaowang@mail.sdu.edu.cn}

\begin{abstract}
We investigate the density of rational points on the Fermat cubic surface and the Cayley cubic surface whose coordinates have few prime factors. The key tools used are the weighted sieve, the circle method and universal torsors.
\end{abstract}
\keywords{sieve method, circle method, universal torsors}

\subjclass[2010]{11D25, 11P32, 11P55, 11N36}

\maketitle

\section{Introduction}
In this paper we are concerned with the almost prime integral points on cubic surfaces.

Let $P_r$ indicate an $r$-almost prime, which is a number with at most $r$ prime factors, counted with multiplicity. Furthermore, let $\mathbb{Z}^4_{\text{prim}}$ be the set of vectors $\mathbf{x}=(x_0,x_1,x_2,x_3)\in\mathbb{Z}^4$ with $\text{gcd}(x_0,x_1,x_2,x_3)=1$. For any cubic surface $S\subset\mathbb{P}^3$ defined over $\mathbb{Q}$, we define the saturation number $r(S)$ to be the least number $r$ such that the set of $\mathbf{x}\in \mathbb{Z}^4_{\text{prim}}$ for which $[\mathbf{x}]\in S$ and $x_0x_1x_2x_3=P_r$ , is Zariski dense in $S$. The main aim of this paper is to show that $r(S)<\infty$ for a non-singular surface (the Fermat cubic surface) and a singular surface (the Cayley cubic surface).

Bourgain, Gamburd and Sarnak \cite{BGS} and Nevo and Sarnak \cite{NS} established upper bounds for saturation numbers for orbits of congruence subgroups of semi-simple groups acting linearly on affine space. Moreover, Liu and Sarnak \cite{LS} considered the saturation number for certain affine quadric surfaces. However, these results do not cover the surfaces considered here.

The Fermat cubic surface is defined in $\mathbb{P}^3$ by the equation
\begin{equation*}
S_1:\ x_0^3+x_1^3+x_2^3+x_3^3=0.
\end{equation*}
It is non-singular and has 27 lines, three of which are rational and take the form $x_i+x_j=x_k+x_l=0$.

The Cayley cubic surface is defined in $\mathbb{P}^3$ by the equation
\begin{equation*}
S_2:\ x_1x_2x_3+x_0x_2x_3+x_0x_1x_3+x_0x_1x_2=0.
\end{equation*}
It has singularity type $4\mathbf{A}_1$. Moreover, there are 9 lines in the surface, three of which have the form $x_i+x_j=x_k+x_l=0$, and the remaining six have the shape $x_i=x_j=0$.

When considering the almost prime points on the surface, we are looking for the set of such points which form a Zariski dense subset. Thus it is not enough to work with almost prime points lying on individual curves contained in the surface. In particular, by writing $U$ for the complement of the lines in the surface $S$, we may restrict attention to the open subset $U\subset S$.

To prove that a set of $[\mathbf{x}]\in S$ is Zariski dense, it suffices to prove that given $\varepsilon>0$ and any $\boldsymbol{\xi}\in\mathbb{R}^4$ satisfying $[\boldsymbol{\xi}]\in U$, there exists $B\in \mathbb{N}$ sufficiently large and at least one point $[\mathbf{x}]$ in the set, such that
\begin{equation*}
\Big|\frac{\mathbf{x}}{B}-\boldsymbol{\xi}\Big|< \varepsilon.
\end{equation*}

The following are our main results and establish the finiteness of $r(S_1)$ and $r(S_2)$.

\begin{thm}
Let $[\boldsymbol{\xi}]\in U_1(\mathbb{R})$ and $\varepsilon>0$. Define
\begin{equation*}
M_{U_1}(\boldsymbol{\xi},\varepsilon,B,r)=\# \left\{\mathbf{x}\in\mathbb{Z}^4_{\text{\em{prim}}}:
\begin{aligned}
&[\mathbf{x}]\in U_1,\,\Big|\frac{\mathbf{x}}{B}-\boldsymbol{\xi}\Big|<\varepsilon,\\
&x_0x_1x_2x_3=P_r
\end{aligned}
\right\}.
\end{equation*}
Then for sufficiently large $B$, we have
\begin{equation*}
M_{U_1}(\boldsymbol{\xi},\varepsilon,B,20)\gg B(\log B)^{-4}.
\end{equation*}
The implicit constant is allowed to depend on $\boldsymbol{\xi}$ and $\varepsilon$. In particular, the saturation number satisfies $r(S_1)\leq 20$.
\end{thm}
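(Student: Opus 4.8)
The plan is to reduce $M_{U_1}$ to an almost-prime problem on a single rational curve lying on $S_1$ near $\boldsymbol{\xi}$, and then to run a multidimensional weighted sieve. Fix $\boldsymbol{\xi}\in U_1(\mathbb{R})$ and $\varepsilon>0$; the implicit constant will be allowed to depend on both. I would work with the conic bundle attached to the rational line $L\colon x_0+x_1=x_2+x_3=0$: the projection $[\mathbf x]\mapsto[s:t]=[x_0+x_1:x_2+x_3]$ has conic fibres, and after the substitution $x_0=su+a$, $x_1=su-a$, $x_2=tu+b$, $x_3=tu-b$ and the identity $x_0^2-x_0x_1+x_1^2=(x_0+x_1)^2-3x_0x_1$, the equation of $S_1$ becomes, off $L$ (where $u\neq 0$), the plane conic
\[
C_{s,t}\colon\qquad 3sa^2+3tb^2+(s^3+t^3)u^2=0 .
\]
Since $S_1$ carries the rational point $[3:4:5:-6]$, which lies on none of the $27$ lines, $S_1$ is unirational over $\mathbb Q$, so $U_1(\mathbb Q)$ is dense in the connected real locus $S_1(\mathbb R)$; I would choose $\mathbf p\in U_1(\mathbb Q)$ projectively within $\tfrac12\varepsilon$ of $\boldsymbol\xi$ and put $[s_0:t_0]=[p_0+p_1:p_2+p_3]$, a reduced integer pair. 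Because $\mathbf p$ lies on no line, $[s_0:t_0]\notin\{[1:0],[0:1],[1:-1]\}$, so $C:=C_{s_0,t_0}$ is a smooth plane conic with the rational point coming from $\mathbf p$, whence $C\cong\mathbb P^1_{\mathbb Q}$, and $C(\mathbb R)$ passes within $O(\varepsilon)$ of $[\boldsymbol\xi]$.

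Composing $\mathbb P^1_{[k:l]}\xrightarrow{\ \sim\ }C\hookrightarrow\mathbb P^3$ with the four coordinate forms yields $x_i=Q_i(k,l)$ for $0\le i\le 3$, with $Q_i\in\mathbb Z[k,l]$ primitive binary quadratic forms, pairwise non-proportional and with no common zero (indeed $C$ meets $\{x_0=x_1=x_2=x_3=0\}=\emptyset$, and for a suitable choice of $\mathbf p$ the $Q_i$ are irreducible and meet each coordinate hyperplane in distinct pairs of points). Restricting $(k,l)$ to the region $\mathcal R_B$ of pairs with $\gcd(k,l)=1$, with $[k:l]$ in a small fixed neighbourhood of the parameter of $\mathbf p$, and with $\max(|k|,|l|)\asymp B^{1/2}$ suitably calibrated, forces $[\mathbf x]$ to stay within $\varepsilon$ of $[\boldsymbol\xi]$ and the primitive representative of $[Q_0:\dots:Q_3]$ to have size $\asymp B$; the area of $\mathcal R_B$ is $\asymp B$. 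Consequently
\[
M_{U_1}(\boldsymbol\xi,\varepsilon,B,20)\ \gg\ \#\Big\{(k,l)\in\mathcal R_B:\ \gcd\big(Q_0(k,l),\dots,Q_3(k,l)\big)=1,\ Q_i(k,l)=P_5\ (0\le i\le 3)\Big\},
\]
since then $x_0x_1x_2x_3=\prod_i Q_i(k,l)$ is a product of four $P_5$'s, hence a $P_{20}$, and the coprimality condition is exactly primitivity of $\mathbf x$.

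It remains to carry out the sieve. The sequence $\big(\prod_i Q_i(k,l)\big)_{(k,l)\in\mathcal R_B}$ has sifting dimension $4$ (each irreducible $Q_i$ contributing one unit), main term $\asymp B$, and positive singular product: the local condition ``$p\nmid Q_i(k,l)$ for all $i$'' is solvable for every prime $p$, automatically for $p\ge 11$ since $\#\mathbb P^1(\mathbb F_p)=p+1>8$, and for $p\le 7$ by the choice of $\mathbf p$. I would then establish a level-of-distribution estimate $\sum_{\mathbf d}\big|r(\mathcal R_B;\mathbf d)\big|\ll B(\log B)^{-A}$ for moduli $\mathbf d=(d_0,\dots,d_3)$ with $\prod_i d_i\le B^{\theta}$, with $\theta$ as large as the method permits, using the geometry of numbers (cutting $\mathcal R_B$ into lattice cosets modulo the $d_i$) for the smaller moduli and exponential-sum/circle-method estimates, together with Weil-type bounds for the ensuing complete character sums, for the larger ones. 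Feeding this into the Diamond--Halberstam--Richert weighted sieve of dimension $4$, with the Richert weights optimised, produces $\gg B(\log B)^{-4}$ pairs $(k,l)\in\mathcal R_B$ for which each $Q_i(k,l)$ is a $P_5$. This gives the stated lower bound; letting $\boldsymbol\xi$ range over $U_1(\mathbb R)$ then yields $r(S_1)\le 20$.

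The main obstacle, I expect, is the sieve numerology: one must reconcile the admissible level $\theta$ with the rather unfavourable sifting limit of a four-dimensional weighted sieve so that ``$P_5$ in each of the four factors'', i.e.\ $r=20$ overall, actually comes out — this forces both a respectable value of $\theta$ (hence the delicate treatment of the remainder terms at large moduli, where the circle method is genuinely needed) and a careful tuning of the weight parameters. The remaining ingredients are more routine: tracking the contents and pairwise coprimality of the $Q_i$ and the primitivity of $\mathbf x$ uniformly through the sieve, arranging by the choice of $\mathbf p$ that the local densities at the small primes are positive and that the $Q_i$ are irreducible, and verifying that $S_1(\mathbb R)$ is connected with $U_1(\mathbb Q)$ dense in it, so that $\mathbf p$ may be taken arbitrarily close to $\boldsymbol\xi$.
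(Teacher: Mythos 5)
Your route is genuinely different from the paper's. The paper parametrises the whole surface at once by Euler's map $\mathbb{P}^2\to S_1$, so that each coordinate becomes a ternary cubic $F_i(\mathbf{y})$; the technical heart is then a new level-of-distribution estimate for the product of these ternary cubics (proved via Deligne's bounds, dual forms and Heath-Brown's point-counting), after which the Diamond--Halberstam weighted sieve with $\kappa=4$, level $\alpha<\tfrac12$ and $|a|\le Y^{4+o(1)}$ gives $r=20$. You instead restrict to a single conic fibre of the bundle over the line $x_0+x_1=x_2+x_3=0$ and sieve the product of four binary quadratics over $(k,l)\asymp B^{1/2}$. The numerology comes out identical: the main term is again $\asymp B$, the sifting dimension is again $4$, the product has size $\asymp B^4=Y^4$, and the elementary lattice-point count (or Marasingha's result for binary forms, which the paper cites precisely as the binary-form analogue of its Lemma 3) gives remainder $\ll MQ^{1+\varepsilon}+Q^{2+\varepsilon}$ with $M=Y^{1/2}$, hence the same level $\alpha<\tfrac12$ and the same $r=20$ from the Diamond--Halberstam computation with $\beta_4\approx 9.07$. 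What your approach buys is that the level-of-distribution input is classical and much easier than the paper's ternary-cubic Lemma 3; what it costs is that everything now depends on choosing a good fibre.

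Two points need more than the assertion ``for a suitable choice of $\mathbf{p}$''. First, you must actually produce, arbitrarily close to an arbitrary $\boldsymbol{\xi}\in U_1(\mathbb{R})$, a rational point $\mathbf{p}$ whose fibre has all four coordinate quadratics $Q_i$ irreducible and such that the local densities at $p\le 7$ are positive; changing $\mathbf{p}$ does not obviously fix the small primes (e.g.\ if every residue class $(k,l)\bmod 2$ gave $2\mid\prod_iQ_i$, condition $\omega(2)<2$ of the sieve would fail). The clean fix is the one the paper uses in a different guise: base the parametrisation at $\mathbf{p}$ so that $[1:0]\mapsto\mathbf{p}$ and restrict $(k,l)\equiv(1,0)\pmod{D}$ for a suitable modulus $D$, which forces $Q_i(k,l)\equiv p_i\pmod{D}$; this requires $\mathbf{p}$ to have all coordinates coprime to $D$, and the density of such $\mathbf{p}$ near any $\boldsymbol{\xi}$ can be extracted from the paper's own parametrisation (take $\mathbf{p}=\mathbf{F}(\mathbf{y})$ with $\mathbf{y}\equiv(1,0,0)\pmod D$, for which $F_i(\mathbf{y})\equiv\pm1$). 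Irreducibility of the $Q_i$ for a dense set of fibres also needs an argument, since reducibility would raise the sifting dimension and hence $r$. Second, your displayed reduction to ``$Q_i(k,l)=P_5$ for each $i$'' claims more than the weighted sieve applied to $\prod_iQ_i$ delivers: the sieve gives $\prod_iQ_i=P_{20}$ without controlling how the prime factors distribute among the four factors. Since only $x_0x_1x_2x_3=P_{20}$ is needed, you should state the reduction for the product; as written, the lower bound you derive from the sieve does not bound the quantity in your display.
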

\begin{thm}
Let $[\boldsymbol{\xi}]\in U_2(\mathbb{R})$. Define
\begin{equation*}
M_{U_2}(\boldsymbol{\xi},B,r)=\# \left\{\mathbf{x}\in\mathbb{Z}^4_{\text{\em{prim}}}:
\begin{aligned}
&[\mathbf{x}]\in U_2,\,\Big|\frac{\mathbf{x}}{B}-\boldsymbol{\xi}\Big|\ll (\log B)^{-1},\\
&x_0x_1x_2x_3=P_r
\end{aligned}
\right\},
\end{equation*}
then for sufficiently large $B$, we have
\begin{equation*}
M_{U_2}(\boldsymbol{\xi},B,12)\gg B(\log B)^{-7}.
\end{equation*}
The implicit constants are allowed to depend on $\boldsymbol{\xi}$. In particular, the saturation number satisfies $r(S_2)\leq 12$.
\end{thm}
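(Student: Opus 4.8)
The plan is to combine the universal torsor of the Cayley cubic surface with a higher-dimensional weighted sieve. The first step is to replace the counting problem on $S_2$ by one for integral points on the universal torsor of the minimal desingularization $\widetilde{S_2}$. By Heath-Brown's explicit parametrization of the Cayley cubic, the set of $\mathbf{x}\in\mathbb{Z}^4_{\mathrm{prim}}$ with $[\mathbf{x}]\in U_2$ is in bijection, up to bounded multiplicity and sign choices, with the set of integer vectors $\mathbf{y}=(y_1,\dots,y_n)$ lying on the torsor (cut out by a short system of relations among monomials in the $y_j$) and satisfying a finite list of coprimality conditions, the original coordinates being recovered as explicit monomials $x_i=\pm\prod_j y_j^{a_{ij}}$. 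Under this dictionary the constraint $|\mathbf{x}/B-\boldsymbol{\xi}|\ll(\log B)^{-1}$, together with $[\boldsymbol{\xi}]\in U_2(\mathbb{R})$ (so that no $\xi_i$ vanishes and $\boldsymbol{\xi}$ lies off the nine lines), confines $\mathbf{y}$ to a box $\mathcal{R}=\mathcal{R}(B,\boldsymbol{\xi})$ whose $j$-th side has length $\asymp B^{\alpha_j}(\log B)^{-1}$ with every $\alpha_j>0$; in particular every torsor coordinate is forced into a genuinely long interval. Writing $x_0x_1x_2x_3=\pm\prod_j y_j^{e_j}$ with $e_j=\sum_i a_{ij}\ge1$ and $\sum_j e_j\alpha_j=4$, the requirement $x_0x_1x_2x_3=P_r$ becomes $\sum_j e_j\Omega(y_j)\le r$.

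The second step is the sieve. Let $\mathcal{T}=\mathcal{T}(B)$ denote the set of integral points of the torsor lying in $\mathcal{R}$ and satisfying the coprimality conditions (the latter opened up by M\"obius inversion), and take for the sieve sequence $\mathcal{A}=\mathcal{A}(B)$ the multiset $\{\,x_0x_1x_2x_3:\mathbf{y}\in\mathcal{T}\,\}$, to be sieved by the primes $p<z=B^{1/u}$. The central analytic input is a level-of-distribution estimate: for squarefree $d$,
\[
\#\{\,\mathbf{y}\in\mathcal{T}:\ d\mid x_0x_1x_2x_3\,\}=\frac{\omega(d)}{d}\,X+R_d,
\]
where $X=\#\mathcal{T}$ is a localized Heath-Brown main term with $X\asymp B(\log B)^{A}$ for an explicit $A$, where $\omega$ is multiplicative with $\omega(p)$ equal on average to the sieve dimension $\kappa$ (the number of distinct torsor coordinates dividing $x_0x_1x_2x_3$, which is determined by the combinatorics of the torsor and is well above $1$), and where
\[
\sum_{d\le D}\mu^2(d)\,|R_d|\ll X(\log B)^{-C}\qquad(D=B^{\theta})
\]
for every fixed $C$ and some admissible exponent $\theta>0$. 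I would prove this by freezing all torsor coordinates but one, using the torsor relations to pin the remaining coordinate into a bounded number of arithmetic progressions, counting the lattice points in them by the hyperbola method, and summing over the frozen variables; the pairwise coprimality among the $y_j$ is what makes $\omega(p)$ average $\kappa$, and the fact that $\mathcal{R}$ has genuine powers of $B$ for its sidelengths in every direction is what lets the accumulated divisor-type losses be absorbed into the saving $(\log B)^{-C}$.

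Granting the level of distribution, the third step is to run a weighted sieve of dimension $\kappa$, of Diamond--Halberstam--Richert type, with $z=B^{1/u}$, choosing $u$, the exponent $\theta$, and the sieve weights jointly. Such a sieve turns the level $\theta$ into a bound $\Omega(x_0x_1x_2x_3)\le r(\kappa,\theta)$ holding on a positive proportion of $\mathcal{A}$, and one verifies that the $\theta$ furnished by the torsor count is large enough to bring $r(\kappa,\theta)$ down to $12$. This yields
\[
\#\{\,\mathbf{y}\in\mathcal{T}:\ x_0x_1x_2x_3=P_{12}\,\}\gg X\prod_{p<z}\Bigl(1-\frac{\omega(p)}{p}\Bigr)\asymp B(\log B)^{-7},
\]
the implied constant being positive because the archimedean density of $\mathcal{R}$ is positive (again using $[\boldsymbol{\xi}]\in U_2(\mathbb{R})$) and $S_2$ carries no congruence obstruction, so the relevant Euler product is nonzero. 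Pushing each such $\mathbf{y}$ back through the torsor dictionary produces a primitive $\mathbf{x}$ with $[\mathbf{x}]\in U_2$, $|\mathbf{x}/B-\boldsymbol{\xi}|\ll(\log B)^{-1}$ and $x_0x_1x_2x_3=P_{12}$, which gives the stated lower bound and hence $r(S_2)\le12$.

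The main obstacle is the level-of-distribution estimate: it requires the solutions of the torsor relations to equidistribute in arithmetic progressions to moduli as large as a fixed power of $B$, with an arbitrary power-of-$\log B$ saving over the main term, uniformly over the box $\mathcal{R}$. Since the torsor relations couple the variables $y_j$, this is not a routine divisor-function computation; one has to exploit the special shape of those relations (they are sums of a small number of low-degree monomials), and it is precisely here that working in the shrinking neighbourhood of radius $(\log B)^{-1}$, rather than a fixed $\varepsilon$, buys the room needed to keep $\sum_{d\le D}|R_d|$ below the main term after the sum over $d$. A secondary difficulty is that, the sieve dimension $\kappa$ being well above $1$, one cannot fall back on the linear sieve and must run the full higher-dimensional weighted sieve, optimizing its parameters against the admissible value of $\theta$ so as to drive $r$ all the way down to $12$.
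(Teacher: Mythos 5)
Your first step (Heath--Brown's torsor parametrization of $U_2$) coincides with the paper's, but the sieve strategy you build on it cannot reach $r=12$, and the paper in fact proceeds quite differently from that point on. The obstruction is arithmetic, not technical. In the torsor coordinates one has $x_i=\prod_{j\neq i}z_{ij}\prod_{j\neq i}y_j$, hence
\[
x_0x_1x_2x_3=\pm\prod_{i<j}z_{ij}^{2}\prod_{i}y_i^{3},
\]
so the multiplicities are $e_j=2$ for the six $z$-variables and $e_j=3$ for the four $y$-variables, with $\sum_j e_j=24$. If, as you assert, the localization forces every torsor coordinate into an interval of length $\asymp B^{\alpha_j}$ with every $\alpha_j>0$, then for large $B$ every coordinate has absolute value at least $2$, and therefore $\Omega(x_0x_1x_2x_3)=\sum_j e_j\Omega(y_j)\ge 24$ identically on your sieve sequence: the set you want the weighted sieve to detect is empty, and no choice of $\kappa$, $\theta$ or weights can produce a positive lower bound for points with $x_0x_1x_2x_3=P_{12}$. (The claim that the localization pins down the exponents $\alpha_j$ is itself unjustified --- the four conditions $|x_i|\asymp B$ leave a positive-dimensional family of admissible exponent vectors --- but even granting it, the conclusion fails.) Points with $\Omega(x_0x_1x_2x_3)\le 12$ exist only on the degenerate subfamily where many torsor coordinates equal $\pm1$, and a sieve run over the full family cannot concentrate on such a subfamily.

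What the paper does instead is to fix that degenerate subfamily from the outset: it sets all $z_{ij}=1$, so the torsor relation collapses to $y_0+y_1+y_2+y_3=0$ with $x_i=\prod_{j\ne i}y_j$, and then takes $y_j=\beta_j p_j$ with each $p_j$ prime, which gives $x_0x_1x_2x_3=\pm(p_0p_1p_2p_3)^3=P_{12}$ automatically. The counting problem is then not a sieve problem at all but a ternary--Goldbach-type one: the number of solutions of $\beta_0p_0+\beta_1p_1+\beta_2p_2+\beta_3p_3=0$ with $p_j$ in intervals of length $B^{1/3}(\log B)^{-1}$ around $\eta_jB^{1/3}$, evaluated by the circle method with a short-interval Siegel--Walfisz input, yielding $\gg B(\log B)^{-7}$. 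Note that even on this subfamily a four-dimensional sieve could never force all four $y_j$ to be prime simultaneously; the circle method is essential here. Finally, your level-of-distribution estimate for torsor points in arithmetic progressions is only sketched and would be a substantial theorem in its own right, but that difficulty is secondary to the structural problem above.
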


These results show that any real point on $U_i$ can be approximated arbitrarily closely by \emph{multiplicatively constrained} rational points on $U_i$, for $i=1,2$.

It is of interest to compare our result with the density of rational points on the cubic surfaces. We set
\begin{equation*}
N_{U}(B)=\#\{\mathbf{x}\in \mathbb{Z}^4_{\text{prim}}:\,[\mathbf{x}]\in U,\,\max |x_i|\leq B \}.
\end{equation*}
Manin (see Batyrev and Manin \cite{BM}) has given a very general conjecture which would predict that there is a suitable positive constant $c$ such that
\begin{equation*}
N_{U}(B)\sim cB(\log B)^{\rho-1},
\end{equation*}
as $B\rightarrow \infty$, where $\rho$ is the rank of the Picard group of the surface. The Manin conjecture is still open for $S_1$ and $S_2$.

For the Fermat cubic surface, we have
\begin{equation*}
B(\log B)^3\ll N_{U_1}(B)\ll B^{\frac{4}{3}+\varepsilon},
\end{equation*}
for any $\varepsilon>0$. The upper bound is established by Heath-Brown \cite{Hb4} and the lower bound is due to Sofos \cite{So}. The order of magnitude of this lower bound agrees with the Manin conjecture.

For the Cayley cubic surface, upper and lower bounds of the expected order of magnitude have been established by Heath-Brown \cite{Hb}, i.e.
\begin{equation*}
B(\log B)^6\ll N_{U_2}(B)\ll B(\log B)^6.
\end{equation*}
The approach combines analytic methods with the theory of universal torsors. For details of universal torsors for singular del Pezzo surfaces, we refer the reader to Derenthal \cite{D}.

Our proof of Theorems 1 and 2 relies on convenient parameterisations of points on the surfaces. For the Fermat cubic surface, we use Euler's parametrisation to get integral points on the surface, for which each of the coordinates is a ternary cubic form. We then apply a weighted sieve to get almost prime points on the surface. For the Cayley cubic surface, we apply the theory of universal torsors to specify some integral solutions in a particular form, whose coordinates have few prime factors, and then give a lower bound for the number of such solutions which are close to some fixed real solution via the circle method. More precisely, we use the circle method to count the number of solutions to the equation
\begin{equation*}
\beta_0p_0+\beta_1p_1+\beta_2p_2+\beta_3p_3=0,
\end{equation*}
where $p_j$ are primes which lie in certain intervals and $\beta_j\in\{1,-1\}$, for $j=0,\dots,3$.

It seems likely that similar methods to those developed in this paper will apply to other cubic surfaces. In the setting of singular surfaces, for example, let $S_3\subset \mathbb{P}^3$ be the cubic surface given by the equation
\begin{equation*}
x_0x_1x_2=x_3(x_0+x_1+x_2)^2.
\end{equation*}
There is a unique singular point which is of type $\mathbf{D}_4$. The density of rational points of bounded height on $S_3$ has been studied by Browning \cite{Br} and Le Boudec \cite{Le}. Arguing much as in this paper, one can show that $r(S_3)\leq 12$.

For a cubic surface $S$, we define $\tilde{r}(S)$ to be the least number $\tilde{r}$ such that the set of $\mathbf{x}\in \mathbb{Z}^4_{\text{prim}}$ for which $[\mathbf{x}]\in S$ and the product $x_0x_1x_2x_3$ has at most $\tilde{r}$ \emph{distinct} prime factors, is Zariski dense in $S$. Then it is worth pointing out that for the Cayley cubic surface, our methods give $\tilde{r}(S_2)\leq 4$. However, for the Fermat cubic surface, we only get $\tilde{r}(S_1)\leq 20$, since we use sieve methods instead of the circle method.

It is natural to consider how close our upper bound is to the truth. For $S_1$, we conjecture $r(S_1)=4$. Considering $S_2$, from the parametrisation of the points presented in Section 3.3, we see that one cannot expect the set of the points on the Cayley cubic surface, for which the product of the coordinates has at most $5$ prime factors, to be Zariski dense. Moreover, there exists a point $(3,5,7,105)$ on the Cayley cubic surface, for which the product of the coordinates has $6$ prime factors. Consequently, the saturation number for the Cayley cubic surface satisfies $r(S_2)\geq 6$.

Throughout the paper, we let the letter $p$, with or without indices, be reserved for prime numbers. Let $\varepsilon$ denote a  small positive constant, not necessarily the same in all occurrences. As usual, let $\mu(n)$, $\varphi(n)$ and $\tau(n)$ denote the M\"{o}bius function, Euler's totient function and the divisor function respectively. We also write $e(\alpha)=e^{2\pi i \alpha}$, $e_d(\alpha)=e^{\frac{2\pi i \alpha}{d}}$ and $(a,b)=\text{gcd}(a,b)$. Finally, we use $m\sim M$ as the abbreviation for the condition $M\leq m<2M$.

\begin{ack}
\emph{The author wishes to express his sincere appreciation to Tim Browning for introducing him to this problem and giving him various suggestions. This work was carried out while the author was a visiting PhD student at University of Bristol. The author is grateful for the hospitality. Thanks also to Pierre Le Boudec for many useful conversations. While working on this paper the author was supported by the China Scholarship Council.}
\end{ack}

\section{The Fermat cubic surface}
\subsection{Preliminary steps}
Considering the Fermat cubic surface , we use the fact that $S_1$ is rational. Thus we have the following map
\begin{equation*}
\begin{split}
\mathbb{P}^2&\rightarrow S_1\\
[\mathbf{y}]&\mapsto [\mathbf{F}(\mathbf{y})],
\end{split}
\end{equation*}
where $\mathbf{y}=(y_1,y_2,y_3)$ and $\mathbf{F}(\mathbf{y})=(F_0(\mathbf{y}),F_1(\mathbf{y}),F_2(\mathbf{y}),F_3(\mathbf{y}))$. We get the parametrisation of the points on the Fermat cubic surface by using Euler's parametrisation.  Let
\begin{equation}
\begin{split}
&H_0(\mathbf{y})=-6y_1y_2y_3,\\
&H_1(\mathbf{y})=y_1(y_1^2+3y_2^2+3y_3^2),\\
&H_2(\mathbf{y})=y_2(y_1^2+3y_2^2+9y_3^2),\\
&H_3(\mathbf{y})=3y_3(y_1^2+y_2^2+3y_3^2).
\end{split}
\end{equation}
Then we may take
\begin{equation}
\begin{split}
&F_0(\mathbf{y})=H_0(\mathbf{y})+H_1(\mathbf{y})+H_2(\mathbf{y})+H_3(\mathbf{y}),\\
&F_1(\mathbf{y})=H_0(\mathbf{y})+H_1(\mathbf{y})-H_2(\mathbf{y})-H_3(\mathbf{y}),\\
&F_2(\mathbf{y})=H_0(\mathbf{y})-H_1(\mathbf{y})+H_2(\mathbf{y})-H_3(\mathbf{y}),\\
&F_3(\mathbf{y})=H_0(\mathbf{y})-H_1(\mathbf{y})-H_2(\mathbf{y})+H_3(\mathbf{y}).
\end{split}
\end{equation}
By arguments in Section 11.9 of Hua \cite{Hua}, we see that
\begin{equation*}
F_0^3(\mathbf{y})+F_1^3(\mathbf{y})+F_2^3(\mathbf{y})+F_3^3(\mathbf{y})=0.
\end{equation*}
Furthermore, we define
\begin{equation*}
F(\mathbf{y})=F_0(\mathbf{y})F_1(\mathbf{y})F_2(\mathbf{y})F_3(\mathbf{y}).
\end{equation*}
\begin{lem}
For $i=0,\dots,3$, each of $F_i(\mathbf{y})$ is non-singular. Consequently, there exists a constant $C_1$ which only depends on $F_i(\mathbf{y})$, such that for $p\geq C_1$, each of $F_i(\mathbf{y})$ is non-singular over $\mathbb{F}_p$.
\end{lem}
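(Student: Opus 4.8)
The plan is to recognise each plane cubic curve $C_i'=\{F_i(\mathbf{y})=0\}\subset\mathbb{P}^2$ as the pull-back, under Euler's parametrisation $\phi\colon[\mathbf{y}]\mapsto[\mathbf{F}(\mathbf{y})]$, of the hyperplane section $C_i=S_1\cap\{x_i=0\}$, and then to exploit the fact that $C_i$ is a smooth curve of genus one. Indeed, since $F_i$ is the $i$-th coordinate of $\mathbf{F}$, a point $[\mathbf{y}]$ at which $\phi$ is defined lies on $C_i'$ precisely when $\phi([\mathbf{y}])\in C_i$; and $C_i$ is the Fermat plane cubic $\sum_{j\ne i}x_j^3=0$, which is non-singular, hence of genus one.

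First I would resolve the finitely many base points of $\phi$ by a sequence of blow-ups $\pi\colon X\to\mathbb{P}^2$, obtaining a morphism $\tilde\phi\colon X\to S_1$; since $\phi$ is a birational parametrisation, $\tilde\phi$ is a birational morphism from a smooth projective surface, and in particular is surjective. The preimage $\tilde\phi^{-1}(C_i)$ is then a curve surjecting onto $C_i$, and its exceptional components, being rational, cannot dominate the genus-one curve $C_i$; hence some component of the strict transform of $C_i'$ must dominate $C_i$. Since $\mathbb{P}^1$ admits no non-constant morphism to a curve of positive genus, that component has positive geometric genus, and the only plane cubic with a component of positive geometric genus is a smooth irreducible cubic. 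Therefore $C_i'$ is a smooth plane cubic, for each $i=0,1,2,3$, which is the first assertion. (One could equally verify this by brute force: expand each $F_i$ from the formulae for $H_0,\dots,H_3$ and check that the three partials $\partial F_i/\partial y_j$ have no common projective zero, or that the discriminant of the ternary cubic form $F_i$ is non-zero.)

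For the second assertion, each $F_i$ has integer coefficients, so its discriminant $\Delta_i$---the classical degree-$12$ invariant of a ternary cubic form---is a non-zero integer, the non-vanishing being exactly the smoothness of $C_i'$ over $\mathbb{Q}$ just established. For every prime $p$ that does not divide $\Delta_0\Delta_1\Delta_2\Delta_3$, and that avoids the finitely many small characteristics in which the discriminant criterion could degenerate, the reduction of $F_i$ modulo $p$ has non-vanishing discriminant and is therefore non-singular over $\mathbb{F}_p$. It then suffices to take $C_1$ larger than all of these finitely many exceptional primes, uniformly in $i$.

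I expect the geometric input---the genus-one nature of the hyperplane sections $C_i$---to do all of the conceptual work; the only laborious part is computational, namely writing down the four cubics $F_i$ explicitly and confirming that none of their discriminants vanishes, which is also what renders the constant $C_1$ effective.
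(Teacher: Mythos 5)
Your proposal is correct, but it proves the first assertion by a genuinely different route from the paper. The paper's proof is the ``brute force'' computation you mention in passing: it writes out the three partials $\partial F_0/\partial y_j$ explicitly, reduces the condition that they vanish simultaneously to a small polynomial system in $y_1,y_2,y_3$, and checks that this system has only the trivial solution (then says ``arguing similarly'' for $F_1,F_2,F_3$). Your geometric argument --- identifying $\{F_i=0\}$ with the pull-back under Euler's parametrisation of the hyperplane section $S_1\cap\{x_i=0\}$, which is a smooth genus-one Fermat plane cubic, resolving the base locus, and noting that neither the rational exceptional curves nor any rational component of the strict transform can dominate a genus-one curve --- is sound and handles all four forms at once without computation; the classification step (an irreducible plane cubic of positive geometric genus is smooth, and every reducible or non-reduced cubic has only rational components) is correct and also rules out repeated factors, so the Jacobian criterion applies. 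The one extra input your route needs, which the paper's direct computation does not, is that Euler's parametrisation is dominant (birational onto $S_1$); this is classical and is implicitly relied upon elsewhere in the paper (e.g.\ for Zariski density in Section 2.4), but you should state it, since without dominance the surjectivity of $\tilde\phi$ fails and the whole argument collapses. For the second assertion both arguments are the same standard reduction: non-singularity over $\mathbb{Q}$ forces a non-zero integer invariant (the resultant of the three partials, equivalently the discriminant of the ternary cubic), and any prime exceeding it reduces the form to a non-singular one over $\mathbb{F}_p$; the paper leaves this step implicit, while you spell it out, which is a small improvement in completeness.
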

\begin{proof}
It suffices to show that each of $F_i(\mathbf{y})$ is non-singular. We have
\begin{equation*}
\begin{split}
&\frac{\partial F_0}{\partial y_1}=3y_1^2+3y_2^2+3y_3^2+2y_1y_2+6y_1y_3-6y_2y_3,\\
&\frac{\partial F_0}{\partial y_2}=y_1^2+9y_2^2+9y_3^2+6y_1y_2-6y_1y_3+6y_2y_3,\\
&\frac{\partial F_0}{\partial y_3}=3y_1^2+3y_2^2+27y_3^2-6y_1y_2+6y_1y_3+18y_2y_3.
\end{split}
\end{equation*}
Thus $(\frac{\partial F_0}{\partial y_1},\frac{\partial F_0}{\partial y_2},\frac{\partial F_0}{\partial y_3})=(0,0,0)$ is equivalent to
\begin{equation*}
\begin{split}
&y_1^2=3y_2y_3-3y_1y_3,\\
&y_2^2=y_1y_3-y_1y_2,\\
&3y_3^2=y_1y_2-3y_2y_3.
\end{split}
\end{equation*}
If $y_1=0$, then we have $y_2=0$ and $y_3=0$. Furthermore, if $y_2=y_3$, then this yields $y_2=0$ and we only have trivial solutions. Now we assume that none of $y_1,y_2,y_3$ is zero and $y_2\neq y_3$. Then we get
\begin{equation*}
y_1=\frac{y_2^2}{y_3-y_2}.
\end{equation*}
Thus we obtain
\begin{equation*}
\begin{split}
&y_2^3+3y_2^2y_3-3y_3^3=0,\\
&y_2^3-6y_2^2y_3+9y_2y_3^2-3y_3^3=0.
\end{split}
\end{equation*}
The system of equations only has trivial solution, then we obtain that $F_0(\mathbf{y})$ is non-singular. Arguing similarly, we see that each of $F_i(\mathbf{y})$ is non-singular.
\end{proof}
\begin{lem}
There exists a constant $C_2$, which only depends on $F(\mathbf{y})$, such that we have
\begin{equation*}
|\#\{\mathbf{y}(\text{\em{mod}}\,p):\ p|F(\mathbf{y})\}- 4p^2|\leq C_2 p^{\frac{3}{2}},
\end{equation*}
provided that $p\geq C_2$.
\end{lem}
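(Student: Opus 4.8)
The plan is to count the affine solutions of $F(\mathbf{y})\equiv 0\pmod p$ by inclusion–exclusion over the four factors $F_0,\dots,F_3$, reducing everything to point counts on the smooth plane cubics cut out by the $F_i$.

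First I would set $A_i=\{\mathbf{y}\in\mathbb{F}_p^3:\ F_i(\mathbf{y})=0\}$, so that $\{\mathbf{y}\ (\text{mod}\ p):\ p\mid F(\mathbf{y})\}=A_0\cup A_1\cup A_2\cup A_3$, and expand by inclusion–exclusion. For the main terms: since $F_i$ is homogeneous of degree $3$, the set $A_i\setminus\{\mathbf{0}\}$ is a union of $\mathbb{F}_p^\times$-orbits for the scaling action, whence $\#A_i=1+(p-1)\#C_i(\mathbb{F}_p)$, where $C_i\subset\mathbb{P}^2$ is the projective cubic curve $F_i=0$. By Lemma 1, for $p\geq C_1$ the curve $C_i$ is smooth over $\mathbb{F}_p$, hence a geometrically irreducible curve of genus $1$ (a reducible plane curve always has a singular point), and the Hasse–Weil bound gives $\#C_i(\mathbb{F}_p)=p+1+O(\sqrt p)$ with an absolute implied constant. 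Therefore $\#A_i=p^2+O(p^{3/2})$, and summing over $i$ yields $\sum_i\#A_i=4p^2+O(p^{3/2})$.

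Next I would bound the remaining terms in the inclusion–exclusion, each of which is contained in some $A_i\cap A_j$ with $i\neq j$. The forms $F_i$ and $F_j$ are not proportional over $\mathbb{Q}$ (comparing the coefficient of $y_1y_2y_3$, which equals $-6$ in all four forms, forces the scalar to be $1$, whence $F_i=F_j$), and this persists modulo $p$ once $p$ is large; so for $p$ beyond a suitable constant the curves $C_i$ and $C_j$ are distinct irreducible cubics sharing no common component. Bézout's theorem then bounds $\#(C_i\cap C_j)(\mathbb{F}_p)$ by $9$, and the same cone argument as above gives $\#(A_i\cap A_j)\leq 1+9(p-1)=O(p)$. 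Since there are only boundedly many such pairwise, triple and quadruple intersection terms, their total contribution is $O(p)$.

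Combining, $\#\{\mathbf{y}\ (\text{mod}\ p):\ p\mid F(\mathbf{y})\}=4p^2+O(p^{3/2})+O(p)=4p^2+O(p^{3/2})$, the implied constants depending only on the forms $F_i$; taking $C_2$ to be larger than $C_1$ and large enough to absorb these constants and the $O(p)$ term into $C_2p^{3/2}$ gives the stated estimate. The only points requiring care are the passage between affine and projective counts and the verification that the $C_i$ remain distinct and irreducible modulo $p$, so that Bézout applies; there is no serious obstacle, as the genus-$1$ curves are handled by the classical Weil bound. (Alternatively one could quote the Lang–Weil estimate directly for the absolutely irreducible affine cones $F_i=0$ in $\mathbb{A}^3$, but the reduction to Hasse–Weil keeps the dependence of $C_2$ transparent.)
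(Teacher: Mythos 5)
Your proposal is correct and follows essentially the same route as the paper: inclusion--exclusion over the four factors, a Weil-type point count with error $O(p^{3/2})$ for each non-singular cubic (the paper cites Deligne's Theorem 8.1 where you invoke Hasse--Weil for the genus-one projective curves via the cone decomposition -- the same bound in this case), and an $O(p)$ bound for the pairwise intersections, which the paper merely asserts and you justify via B\'ezout. The extra details you supply (non-proportionality of the $F_i$ from the $y_1y_2y_3$ coefficient, irreducibility of smooth plane curves) are sound and only make the argument more complete.
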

\begin{proof}
By Lemma 1, we see that there exists a constant $C_1$, such that for $p\geq C_1$, each of $F_i(\mathbf{y})$ is non-singular over $\mathbb{F}_p$. It follows from Theorem 8.1 of Deligne \cite{Deligne} that there exists a constant $A_1$, which only depends on $F_i(\mathbf{y})$, such that we get
\begin{equation*}
|\#\{\mathbf{y}(\text{mod}\,p):\ p|F_i(\mathbf{y})\}- p^2|\leq A_1 p^{\frac{3}{2}}.
\end{equation*}
Moreover, we have
\begin{equation*}
\#\{\mathbf{y}(\text{mod}\,p):\ p|F_i(\mathbf{y}),\,p|F_j(\mathbf{y}),\  \text{for}\ i\neq j\}\leq A_2 p,
\end{equation*}
for some constant $A_2$ which only depends on $F_i(\mathbf{y})$. Thus we prove the lemma.
\end{proof}
\subsection{Level of distribution}
In this section, we prove a level of distribution formula for ternary cubic forms. Marasingha \cite{Mar} used the geometry of numbers to prove a level of distribution formula for binary forms. However, his techniques do not extend to ternary forms.

By Lemma 1 and Lemma 2, we see that there exists a constant $C\geq5$, which only depends on $F_i(\mathbf{y})$, such that each of $F_i(\mathbf{y})$ is non-singular over $\mathbb{F}_p$ and
\begin{equation*}
|\#\{\mathbf{y}(\text{mod}\,p):\ p|F(\mathbf{y})\}- 4p^2|\leq C p^{\frac{3}{2}},
\end{equation*}
provided that $p\geq C$. Define
\begin{equation*}
D=\text{rad}\Big(\text{Res}(F_1,F_2,F_3)\prod_{p\leq C}p \Big),
\end{equation*}
where $\text{rad}(n)$ is the squarefree kernel of $n$ and $\text{Res}(F_1,F_2,F_3)$ is the resultant of $F_1$, $F_2$ and $F_3$. Let $\mathbf{z}=(1,0,0)$. Then for $\mathbf{y}\equiv \mathbf{z}\,(\text{mod}\,D)$, we have $(F_i(\mathbf{y}),D)=1$. Write
\begin{equation*}
\begin{split}
\Psi&=\{\mathbf{y}\in \mathbb{Z}^3:\ \mathbf{y}\equiv \mathbf{z}\,(\text{mod}\,D)\},\\
\Lambda_{d}&=\{ \mathbf{y}\in \mathbb{Z}^3:\,d|F(\mathbf{y})\},\\
\rho(d)&=\#\{\mathbf{y}(\text{mod}\,d):\ d|F(\mathbf{y})\}.
\end{split}
\end{equation*}
We prove the following level of distribution formula.
\begin{lem}
Let
\begin{equation*}
L(M,Q)=\sum_{\stackrel{d\leq Q}{(d,D)=1}}\sup_{\partial(\mathcal {R})\leq M}\mu^2(d)\Big| \#\Lambda_{d}\cap\mathcal {R}\cap\Psi-\frac{\rho(d)\text{\emph{vol}}(\mathcal{R})}{d^3D^3}\Big|,
\end{equation*}
where $\mathcal{R}$ is a convex subset of $\mathbb{R}^3$ with piecewise continuously differentiable boundary. Then for any $\varepsilon>0$, we have
\begin{equation*}
L(M,Q)\ll (M^2Q^{\frac{1}{2}}+MQ+Q^2)Q^{\varepsilon}.
\end{equation*}
\end{lem}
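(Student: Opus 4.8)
The plan is to estimate $L(M,Q)$ by combining a classical argument bounding lattice points in convex regions against the main term with a careful treatment of the "fluctuation" part via exponential sums over the cubic forms. First I would reduce the problem to individual moduli: for each squarefree $d$ with $(d,D)=1$, I want to count $\mathbf{y}\in\mathcal{R}\cap\Psi$ with $d\mid F(\mathbf{y})$. Sorting $\mathbf{y}$ into the $\rho(d)$ residue classes modulo $d$ on which $F$ vanishes (and the single residue class $\mathbf{z}$ modulo $D$, which is coprime to $d$ so the two conditions combine into one class modulo $dD$ by CRT), the count becomes $\rho(d)$ times the number of points of a fixed translate of the lattice $(dD)\mathbb{Z}^3$ lying in $\mathcal{R}$. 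The standard Lipschitz principle (Davenport's lemma) gives that this equals $\mathrm{vol}(\mathcal{R})/(dD)^3$ with an error $O(1 + \partial(\mathcal{R})/(dD)^2 + \text{lower order})\ll 1 + M^2/(dD)^2$ uniformly over $\mathcal{R}$ with $\partial(\mathcal{R})\le M$. Multiplying by $\rho(d)$ and using the trivial bound $\rho(d)\ll_\varepsilon d^{2+\varepsilon}$ (which follows from $\rho$ being multiplicative together with Lemma~2 and the Lang--Weil/Deligne estimates, bounding the small primes and the bad primes dividing $D$ crudely) would yield, after summing over $d\le Q$, a contribution $\ll (Q^{3+\varepsilon} + M^2 Q^{1+\varepsilon})$ — which is far too weak in the $Q$ aspect.

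The point of the lemma is therefore to do better than this term-by-term estimate by exploiting cancellation in the sum over $d$, and this is where the real work lies. The right approach is to detect the congruence $d\mid F(\mathbf{y})$ and the location $\mathbf{y}\in\mathcal{R}$ simultaneously via additive characters: write the indicator of $\mathbf{y}\in\mathcal{R}$ using a smooth majorant/minorant with controlled Fourier transform (the Fourier data governed by $M$), and detect $d\mid F(\mathbf{y})$ by summing $e_d(aF(\mathbf{y}))$ over $a\bmod d$. The main term comes from $a=0$; the error term becomes a multilinear sum over $d\le Q$, over the dual frequency $\mathbf{m}$ of the region (with $|\mathbf{m}|$ effectively bounded in terms of $M$ and $dD$), and over $a$ coprime to $d$, of complete exponential sums $S_d(a,\mathbf{m})=\sum_{\mathbf{y}\bmod dD} e_d(aF(\mathbf{y}))e_{dD}(\mathbf{m}\cdot\mathbf{y})$. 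By multiplicativity in $d$ (and separating the fixed modulus $D$) these factor into local sums, and for primes $p\nmid D$ the non-singularity of $F_i$ over $\mathbb{F}_p$ from Lemma~1 gives Deligne-type square-root cancellation: $|S_p(a,\mathbf{m})|\ll p^{3/2}$ when $p\nmid\mathbf{m}$, with the usual adjustments when $p\mid\mathbf{m}$ or $a\equiv 0$. Feeding these bounds into the multilinear sum and organising the ranges of summation — $d$ up to $Q$, frequencies up to roughly $M\cdot dD$, $a$ up to $d$ — produces the three terms: the diagonal-type frequency-zero contribution scaled by the region gives the $M^2 Q^{1/2}$ term, the interaction of region frequencies with the square-root-cancelling character sums gives $MQ$, and the fully generic range gives $Q^2$, all up to $Q^\varepsilon$ to absorb divisor-function losses from the multiplicativity bookkeeping.

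The main obstacle will be obtaining the square-root cancellation bound for the exponential sums $S_p(a,\mathbf{m})$ \emph{uniformly} in $\mathbf{m}$ and handling the cases where $p$ divides some but not all components of $\mathbf{m}$, since the geometry of the intersection of $\{F\equiv 0\}$ with the linear form $\mathbf{m}\cdot\mathbf{y}$ degrades; here one must either stratify according to $p$-adic valuations of the $m_i$ or invoke a sufficiently robust form of Deligne's bound (Theorem~8.1 of \cite{Deligne}, as already used in Lemma~2) for the relevant varieties, together with an a priori bound for the bad moduli dividing $D$ which contributes only a constant factor. A secondary technical point is the choice of smooth approximations to $\mathbf{1}_{\mathcal{R}}$ with a sharp trade-off between the truncation level of the dual sum and the error introduced, so that the parameter $M$ enters the final bound only through the stated monomials; this is routine but must be set up carefully to avoid losing powers of $Q$. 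Once these ingredients are in place, the claimed estimate follows by collecting terms.
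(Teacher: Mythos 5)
Your overall architecture is the right one and matches the paper's: complete the count in residue classes, detect everything with additive characters, factor the resulting complete sums over primes via multiplicativity, and use non-singularity of the $F_i$ to get cancellation. However, there is a genuine gap at the heart of the argument, namely the exponential sum bound you propose and the treatment of its degenerate locus. First, the claimed square-root cancellation $|S_p(a,\mathbf{m})|\ll p^{3/2}$ for $\sum_{\mathbf{y}\bmod p}e_p(aF(\mathbf{y})+c\,\mathbf{m}\cdot\mathbf{y})$ does not follow from Deligne's theorem as invoked: here $F=F_0F_1F_2F_3$ is a \emph{reducible} degree-$12$ form, so the hypersurface $F=0$ is badly singular and the smooth-leading-form hypotheses fail. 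One must first perform inclusion--exclusion over the four non-singular cubic factors (as the paper does) and work with sums over each curve $F_i=0$ separately. Second, even if a uniform $p^{3/2}$ bound held, it is quantitatively insufficient: after executing the $a$-average it gives $|S_p(\mathbf{m})|\ll p^{3/2}$, and feeding $d^{3/2+\varepsilon}$ into the $\mathbf{m}$- and $d$-sums produces a term of size $Q^{5/2+\varepsilon}$, not the claimed $Q^{2+\varepsilon}$. The bound one actually needs is $|S_p(\mathbf{m})|\ll p\,(p,G(\mathbf{m}))^{1/2}$, where $G=\prod_iG_i$ and $G_i$ is the degree-$6$ dual form of $F_i$ (Heath-Brown's machinery); the generic saving is a full factor of $p$ over the $4p^2$ points of the variety, degrading only when the hyperplane $\mathbf{m}\cdot\mathbf{y}=0$ is tangent to $F_i=0$.

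This also means you have misidentified the degeneration locus. It is not governed by $p$ dividing components of $\mathbf{m}$ (that case is easy and essentially trivial), but by $p\mid G(\mathbf{m})$, a codimension-one condition in $\mathbf{m}$-space; stratifying by $p$-adic valuations of the $m_j$ will not resolve it. The paper splits into $G(\mathbf{m})\neq0$, handled by summing over divisors $h\mid G(\mathbf{m})$ with $\tau(n)\ll n^{\varepsilon}$, and $G(\mathbf{m})=0$ over $\mathbb{Z}$, where one must invoke a paucity estimate for integral points on the sextic dual surface ($\ll(M_1M_2M_3)^{1/3}$ points in a box, from Heath-Brown's determinant method) together with a dyadic dissection; it is precisely this last contribution that produces the $M^2Q^{1/2+\varepsilon}$ term in the final bound, not the ``frequency-zero'' term as you suggest. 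Without the dual forms and the point-count on $G_i=0$, the lemma as stated is out of reach by your route.
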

\begin{proof}
Note that
\begin{equation*}
\#\Lambda_{d}\cap\mathcal {R}\cap\Psi=\sum_{\stackrel{\mathbf{y}\in\mathcal {R}\cap\Psi}{d|F(\mathbf{y})}}1=\sum_{\stackrel{\mathbf{u}(\text{mod}\,d)}{d|F(\mathbf{u})}} \sum_{\stackrel{\mathbf{y}\in\mathcal {R}\cap\Psi}{\mathbf{y}\equiv\mathbf{u}(\text{mod}\,d) }}1.
\end{equation*}
Thus we have
\begin{equation*}
\begin{split}
\#\Lambda_{d}\cap\mathcal {R}\cap\Psi
=&\frac{1}{d^3}\sum_{\mathbf{m}(\text{mod}\,d)}\sum_{\stackrel{\mathbf{u}(\text{mod}\,d)}{d|F(\mathbf{u})}}e_d(-\mathbf{m}\cdot\mathbf{u}) \sum_{{\mathbf{y}\in\mathcal {R}\cap\Psi}}e_d(\mathbf{m}\cdot\mathbf{y})\\
=&\frac{1}{d^3}\sum_{\stackrel{\mathbf{m}(\text{mod}\,d)}{\mathbf{m}\neq \mathbf{0}}}\sum_{\stackrel{\mathbf{u}(\text{mod}\,d)}{d|F(\mathbf{u})}}e_d(-\mathbf{m}\cdot\mathbf{u}) \sum_{{\mathbf{y}\in\mathcal {R}\cap\Psi}}e_d(\mathbf{m}\cdot\mathbf{y})\\
&+\frac{1}{d^3}\rho(d)\sum_{{\mathbf{y}\in\mathcal {R}\cap\Psi}}1.
\end{split}
\end{equation*}
Note that
\begin{equation*}
\sum_{{\mathbf{y}\in\mathcal {R}\cap\Psi}}1=\frac{\text{vol}(\mathcal{R})}{D^3}+O(M^2).
\end{equation*}
Then we obtain
\begin{equation}
\begin{split}
 &\#\Lambda_{d}\cap\mathcal {R}\cap\Psi-\frac{\rho(d)\text{vol}(\mathcal{R})}{d^3D^3}\\
&=\frac{1}{d^3}\sum_{\stackrel{\mathbf{m}(\text{mod}\,d)}{\mathbf{m}\neq \mathbf{0}}}\sum_{\stackrel{\mathbf{u}(\text{mod}\,d)}{d|F(\mathbf{u})}}e_d(-\mathbf{m}\cdot\mathbf{u}) \sum_{{\mathbf{y}\in\mathcal {R}\cap\Psi}}e_d(\mathbf{m}\cdot\mathbf{y})+O(M^2).
\end{split}
\end{equation}
Write
\begin{equation*}
S_p(\mathbf{m})=\sum_{\stackrel{\mathbf{u}(\text{mod}\,p)}{p|F(\mathbf{u})}}e_p(-\mathbf{m}\cdot\mathbf{u}).
\end{equation*}
Then we get
\begin{equation*}
\sum_{\stackrel{\mathbf{u}(\text{mod}\,d)}{d|F(\mathbf{u})}}e_d(-\mathbf{m}\cdot\mathbf{u})=\prod_{p|d} S_p(\mathbf{m}).
\end{equation*}
Using the inclusion-exclusion principle, we have
\begin{equation*}
S_p(\mathbf{m})-\sum_{i=0}^3\sum_{\stackrel{\mathbf{y}(\text{mod}\,p)}{p|F_i(\mathbf{y})}}e_{p}(\mathbf{m}\cdot\mathbf{y})\ll p.
\end{equation*}
By Lemma 6 of  Heath-Brown \cite{Hb2}, we see that for each $F_i(\mathbf{y})$, there exists the dual form $G_i(\mathbf{m})$ which is a ternary form. Moreover, the degree of each $G_i(\mathbf{m})$ is 6. Arguing similarly as in Section 2 of Heath-Brown \cite{Hb3}, we obtain
\begin{equation*}
\sum_{\stackrel{\mathbf{y}(\text{mod}\,p)}{p|F_i(\mathbf{y})}}e_{p}(\mathbf{m}\cdot\mathbf{y})\ll p (p,G_i(\mathbf{m}))^{\frac{1}{2}},
\end{equation*}
where the implicit constant only depends on $F_i(\mathbf{y})$. Write
\begin{equation*}
G(\mathbf{m})=\prod_{i=0}^3G_i(\mathbf{m}).
\end{equation*}
Then we get
\begin{equation*}
|S_p(\mathbf{m})|\leq Ap (p,G(\mathbf{m}))^{\frac{1}{2}},
\end{equation*}
where $A$ is a constant only depending on $F_i(\mathbf{y})$. It follows that
\begin{equation}
\Big|\sum_{\stackrel{\mathbf{u}(\text{mod}\,d)}{d|F(\mathbf{u})}}e_d(-\mathbf{m}\cdot\mathbf{u})\Big|\leq d^{\varepsilon} (d,G(\mathbf{m}))^{\frac{1}{2}}.
\end{equation}
Furthermore, for $\partial(\mathcal {R})\leq M$, we have
\begin{equation}
\sum_{{\mathbf{y}\in\mathcal {R}\cap\Psi}}e_d(\mathbf{m}\cdot\mathbf{y})\ll\prod_{j=1}^3\min\Big\{M,\frac{1}{\|\frac{m_jD}{d}\|}\Big\}.
\end{equation}
Combining (2.3) with (2.4) and (2.5), we get
\begin{equation}
\begin{split}
L(M,Q)&\ll \sum_{d\leq Q}\frac{1}{d^3}\sum_{\stackrel{\mathbf{m}(\text{mod}\,d)}{\mathbf{m}\neq \mathbf{0}}}d^{1+\varepsilon} (d,G(\mathbf{m}))^{\frac{1}{2}}\prod_{j=1}^3\min\big\{M,\frac{d}{m_j}\big\}+O(M^2)\\
&\ll\sum_{\stackrel{0\leq m_j\leq Q}{\mathbf{m}\neq \mathbf{0}}}\sum_{d\leq Q}d^{-2+\varepsilon} (d,G(\mathbf{m}))^{\frac{1}{2}}\prod_{j=1}^3\min\big\{M,\frac{d}{m_j}\big\}+O(M^2).
\end{split}
\end{equation}
Let $T_1$ and $T_2$ denote the contribution from $\mathbf{m}$ with $G(\mathbf{m})\neq0$ and $G(\mathbf{m})=0$, respectively. Using the bound $\tau(n)\ll n^{\varepsilon}$, we have
\begin{equation}
\begin{split}
T_1\ll&\sum_{\stackrel{0\leq m_j\leq Q}{G(\mathbf{m})\neq0}}\sum_{h|G(\mathbf{m})}\sum_{d'\leq \frac{Q}{h}}(hd')^{-2+\varepsilon}h^{\frac{1}{2}}
\prod_{j=1}^3\min\big\{M,\frac{hd'}{m_j}\big\}\\
&\ll M^2Q^{\varepsilon}+MQ^{1+\varepsilon}+Q^{2+\varepsilon}.
\end{split}
\end{equation}
Now we proceed to estimate $T_2$. Note that if $F_i(\mathbf{y})=\mathbf{m}\cdot\mathbf{y}=0$ defines a non-singular variety, then we get $G_i(\mathbf{m})\neq0$. Moreover, $F_0(\mathbf{y})=y_1=0$ defines a non-singular variety, thus we obtain $G_0(1,0,0)\neq0$. By similar arguments, we see that $m_j^6$ appears with non-zero coefficient in each $G_i(\mathbf{m})$. If $\mathbf{m}$ satisfies $m_1m_2m_3=0$, $G(\mathbf{m})=0$ and $\mathbf{m}\neq \mathbf{0}$, then we obtain that only one of $m_j$ vanishes. Without loss of generality, we may assume $m_1=0$. Write
\begin{equation*}
T'_2=\sum_{\stackrel{1\leq m_j\leq Q}{G(\mathbf{m})=0}}\sum_{d\leq Q}d^{-2+\varepsilon} (d,G(\mathbf{m}))^{\frac{1}{2}}\prod_{j=1}^3\min\big\{M,\frac{d}{m_j}\big\}.
\end{equation*}
 Then we get
\begin{equation}
\begin{split}
T_2-T'_2&\ll M^2\sum_{1\leq m_2\leq Q}m_2^{-1}\sum_{\stackrel{1\leq m_3\leq Q}{G(0,m_2,m_3)=0}} \sum_{d\leq Q}d^{-\frac{1}{2}+\varepsilon}\\
&\ll M^2\sum_{1\leq m_2\leq Q}m_2^{-1}\sum_{d\leq Q}d^{-\frac{1}{2}+\varepsilon}\\
&\ll M^2Q^{\frac{1}{2}+\varepsilon}.
\end{split}
\end{equation}
It follows from Theorem 3 of Heath-Brown \cite{Hb5} that
\begin{equation*}
\#\{\mathbf{m}\in \mathbb{Z}^3_{\text{prim}}:\,G_i(\mathbf{m})=0, |m_j|\leq M_j\}\ll (M_1M_2M_3)^\frac{1}{9},
\end{equation*}
where $M_1$, $M_2$ and $M_3$ are positive parameters and $\mathbb{Z}^3_{\text{prim}}$ is the set of vectors $\mathbf{m}=(m_1,m_2,m_3)\in\mathbb{Z}^3$ with $\text{gcd}(m_1,m_2,m_3)=1$. Thus we obtain
\begin{equation*}
\begin{split}
&\#\{\mathbf{m}\in \mathbb{Z}^3:\,G_i(\mathbf{m})=0, |m_j|\leq M_j\}\\
&\ll\sum_{h\leq(M_1M_2M_3)^{\frac{1}{3}}}\#\Big\{\mathbf{m}\in \mathbb{Z}^3_{\text{prim}}:\,G_i(\mathbf{m})=0, |m_j|\leq \frac{M_j}{h}\Big\}\\
&\ll (M_1M_2M_3)^{\frac{1}{3}}.
\end{split}
\end{equation*}
Then a familiar dyadic dissection argument yields that there exist numbers
\begin{equation*}
0<M'_1,M'_2,M'_3\leq Q,
\end{equation*}
such that we have
\begin{equation*}
T'_2\ll Q^{\varepsilon}\sum_{\stackrel{m_j\sim M'_j}{G(\mathbf{m})=0}}\sum_{d\leq Q}d^{-\frac{3}{2}+\varepsilon}\prod_{j=1}^3\min\big\{M,\frac{d}{m_j}\big\}.
\end{equation*}
Using the bound
\begin{equation*}
\min\big\{M,\frac{d}{m_j}\big\}\leq M^{\frac{2}{3}}\Big(\frac{d}{m_j}\Big)^{\frac{1}{3}},
\end{equation*}
we get
\begin{equation}
\begin{split}
T'_2&\ll M^2Q^{\varepsilon}(M'_1M'_2M'_3)^{-\frac{1}{3}}\sum_{\stackrel{m_j\sim M'_j}{G(\mathbf{m})=0}}\sum_{d\leq Q}d^{-\frac{1}{2}+\varepsilon}\\
&\ll M^2Q^{\frac{1}{2}+\varepsilon}.
\end{split}
\end{equation}
Combining (2.8) with (2.9), we get
\begin{equation}
T_2\ll MQ^{\frac{3}{2}+\varepsilon}.
\end{equation}
Inserting (2.7) and (2.10) into (2.6), we obtain
\begin{equation*}
L(M,Q)\ll M^2Q^{\frac{1}{2}+\varepsilon}+MQ^{1+\varepsilon}+Q^{2+\varepsilon}.
\end{equation*}
Thus the lemma is proved.
\end{proof}
\subsection{The weighted sieve}
In this section, we get almost prime value of $F(\mathbf{y})$. We use the sieve of Diamond and Halberstam to prove the following lemma.
\begin{lem}
Let
\begin{equation*}
\mathfrak{A}=\{F(\mathbf{y}):\ \mathbf{y}\in B^{\frac{1}{3}}\mathcal{R}^{(0)}\cap \Psi\},
\end{equation*}
where $B$ is a large parameter and $\mathcal{R}^{(0)}$ is some fixed convex subset of $\mathbb{R}^3$ with piecewise continuously differentiable boundary. Then we have
\begin{equation*}
\#\{P_r:\ P_r\in\mathfrak{A}\}\gg B(\log B)^{-4},
\end{equation*}
provided that $r\geq20$.
\end{lem}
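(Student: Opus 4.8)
The plan is to apply the weighted sieve of Diamond and Halberstam to the sequence $\mathfrak{A}=\{F(\mathbf{y}):\mathbf{y}\in B^{1/3}\mathcal{R}^{(0)}\cap\Psi\}$, viewing $F$ as a product of four ternary cubic forms. First I would set up the sieve data: for a squarefree $d$ coprime to $D$, the number of $\mathbf{y}\in B^{1/3}\mathcal{R}^{(0)}\cap\Psi$ with $d\mid F(\mathbf{y})$ should be $\tfrac{\rho(d)}{d^3}\cdot\tfrac{\mathrm{vol}(B^{1/3}\mathcal{R}^{(0)})}{D^3}+r_d(B)$ with main term $X\,g(d)$, where $X\asymp B$ and $g(d)=\rho(d)/d^3$ is multiplicative. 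By Lemma 2, $\rho(p)=4p^2+O(p^{3/2})$ for $p\geq C$, so $g(p)=4/p+O(p^{-3/2})$; this tells us the sieve has \emph{dimension} $\kappa=4$, which is why the final exponent on $\log B$ is $-4$. The singular series / density conditions for the Diamond--Halberstam sieve (the $\Omega$-conditions, i.e. $\prod_{w\le p<z}(1-g(p))^{-1}\ll(\log z/\log w)^{\kappa}$) follow from this asymptotic together with the fact that we have removed the bad primes $p\le C$ and those dividing the resultant by restricting $\mathbf{y}\equiv\mathbf{z}\pmod D$, which forces $(F_i(\mathbf{y}),D)=1$.

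Next I would verify the level of distribution. The error term in the sieve is controlled by
\begin{equation*}
\sum_{\substack{d\le Q\\(d,D)=1}}\mu^2(d)|r_d(B)|\le L(B^{1/3},Q),
\end{equation*}
which by Lemma 3 is $\ll(B^{2/3}Q^{1/2}+B^{1/3}Q+Q^2)Q^{\varepsilon}$. For this to be $\ll B(\log B)^{-A}$ we need $Q^{1/2}\ll B^{1/3-\varepsilon}$, i.e. we may take the level of distribution $Q=X^{\theta}$ with any $\theta<2/3$ (since $X\asymp B$ and $B^{2/3}Q^{1/2}\le B^{2/3}\cdot B^{\theta/2}$ needs $\theta/2<1/3$; the other two terms are weaker). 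So the sieve parameter is $\theta=2/3-\varepsilon$.

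With $\kappa=4$ and $\theta=2/3$, I would then quote the Diamond--Halberstam--Richert theorem: if $F=F_0F_1F_2F_3$ is a product of $g=4$ irreducible forms each of degree $3$, so that $F$ has degree $12$ and every prime factor of a typical value $F(\mathbf{y})$ with $\mathbf{y}$ in a box of size $B^{1/3}$ is at most $B^{4}$ (each $|F_i(\mathbf{y})|\ll B$), the sieve produces values that are $P_r$ provided $r$ exceeds a threshold determined by $\kappa$, $\theta$, and the ratio $\log(\text{typical size of }F)/\log(\text{sieving limit }z)$; one takes $z=X^{1/s}$ with $s=1/\theta$ roughly, and bookkeeping the exponents $\tfrac{\log B^{4}}{\log z}$ against the $\beta_\kappa$ constant of the Diamond--Halberstam sieve for $\kappa=4$ yields $r=20$. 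I would present this as: each $F_i(\mathbf{y})\ll B$ contributes at most $\log B/\log z=1/\theta\cdot(1+\varepsilon)$ worth of prime factors below $z$ plus the sifted-out part, and carrying the DHR numerology with $\kappa=4$, $\theta=2/3$ gives the admissible $r=20$, with the lower bound $\#\{P_r\in\mathfrak{A}\}\gg X\prod_{p<z}(1-g(p))\gg B(\log B)^{-4}$ coming from the Mertens-type estimate for the $\kappa=4$ product.

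The main obstacle is the interface between the geometric input and the sieve machinery: one must check that the Diamond--Halberstam sieve applies to a \emph{product} of forms (rather than a single polynomial) with the correct effective dimension $\kappa=4$, and that the ``$P_r$'' conclusion — which in the abstract sieve is a statement about $\prod_i F_i(\mathbf{y})$ having few prime factors — is exactly the multiplicative constraint $x_0x_1x_2x_3=P_r$ we want, given $x_i=F_i(\mathbf{y})$. A secondary subtlety is ensuring the forms $F_i$ have no common factor and are genuinely non-proportional (so that $G(\mathbf{m})=\prod G_i(\mathbf{m})$ and the resultant conditions are non-degenerate), which is where Lemma 1 and the definition of $D$ do the work; I would check that the Diamond--Halberstam threshold constant $\nu_\kappa$ for $\kappa=4$ combined with $\theta=2/3$ and degree-bookkeeping indeed lands at $r=20$ and not lower, which is a finite but delicate computation with the DHR tables.
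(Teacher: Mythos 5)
Your overall strategy is the same as the paper's: apply the Diamond--Halberstam weighted sieve to $\mathfrak{A}$ with dimension $\kappa=4$ (coming from $\rho(p)=4p^2+O(p^{3/2})$), control the remainders via Lemma 3, use the congruence $\mathbf{y}\equiv\mathbf{z}\pmod{D}$ to kill the bad primes, and extract $r=20$ from the DHR numerology with $\beta_4=9.0722\ldots$. However, there is a concrete error in your level-of-distribution step. With $M=B^{1/3}$ and $Q=B^{\theta}$, Lemma 3 gives
\begin{equation*}
L(M,Q)\ll B^{\frac{2}{3}+\frac{\theta}{2}+\varepsilon}+B^{\frac{1}{3}+\theta+\varepsilon}+B^{2\theta+\varepsilon}.
\end{equation*}
The first two terms indeed allow $\theta<2/3$, but the third term $Q^2=B^{2\theta}$ is \emph{not} weaker: it forces $\theta<1/2$. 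Your assertion that ``the other two terms are weaker'' is false, and the admissible level of distribution is $1/2$, which is exactly why the paper takes $\alpha<1/2$ in verifying condition (2.13).

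This is not a cosmetic issue, because the level of distribution enters the final threshold. In the Diamond--Halberstam theorem the bound on $r$ has leading term $\alpha\mu u-1$ with $u>\alpha^{-1}$ and $\alpha\mu\geq 4$ forced by $|F(\mathbf{y})|\ll B^4\asymp Y^4$; thus $\alpha=1/2$ contributes roughly $8-1=7$ to the threshold where $\alpha=2/3$ would contribute only $6-1=5$. Carrying out the computation as in Section 6 of Liu--Sarnak with $\alpha=1/2$ gives the minimum $19.7559\ldots$ of the function $m(\lambda)$, hence $r=20$; with your claimed $\theta=2/3$ the same machinery would yield a value strictly below $20$. That your bookkeeping with the wrong $\theta$ still ``lands at $r=20$'' indicates the numerology was asserted rather than computed, and the agreement with the paper's answer is coincidental. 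To repair the argument, replace $\theta<2/3$ by $\theta<1/2$ throughout and then actually perform the optimization over the DHR weights (or quote the Liu--Sarnak computation) to confirm that the threshold falls just below $20$.
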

\begin{proof}
Define the set $\mathfrak{P}$ of primes which do not divide $D$, and $\bar{\mathfrak{P}}$ to be the complement of $\mathfrak{P}$ in the set of all primes. Let
\begin{equation*}
Y=\frac{\text{vol}(\mathcal{R}^{(0)})B}{D^3}.
\end{equation*}
Recall that $\rho(p)=\#\{\mathbf{y}(\text{mod}\,p):\ p|F(\mathbf{y})\}$. Introduce the multiplicative function $\omega(\cdot)$ which satisfies $\omega(1)=1$, $\omega(p)=0$ for $p\in\bar{\mathfrak{P}}$ and for $p\in\mathfrak{P}$,
\begin{equation*}
\omega(p)=\frac{\rho(p)}{p^2}.
\end{equation*}
Write
\begin{equation*}
R_d=\#\{a\in \mathfrak{A}:\,d|a\}-\frac{\omega(d)}{d}Y\quad \text{if}\ \mu(d)\neq 0.
\end{equation*}
The following lemma is a restatement of Theorem 1 in Diamond and Halberstam \cite{DH}.
\begin{lem}
Suppose that there exist real constants $\kappa>1$, $A_1,A_2\geq2$ and $A_3\geq1$ such that
\begin{equation}
0\leq \omega (p)<p,
\end{equation}
\begin{equation}
\prod_{z_1\leq p<z}\Big(1-\frac{\omega(p)}{p}\Big)^{-1}\leq \Big(\frac{\log z}{\log z_1}\Big)^{\kappa}\Big(1+\frac{A_1}{\log z_1}\Big) \quad \text{for}\ 2\leq z_1<z,
\end{equation}
\begin{equation}
\sum_{\stackrel{d<Y^{\alpha}(\log Y)^{-A_3}}{(d,\bar{\mathfrak{P}})=1}}\mu^2(d)4^{\nu(d)}|R_d|\leq A_2\frac{Y}{\log^{\kappa+1}Y},
\end{equation}
for some $\alpha$ with $0<\alpha\leq 1$; that
\begin{equation}
(a,\bar{\mathfrak{P}})=1\quad \text{for all}\  a\in\mathfrak{A},
\end{equation}
and that
\begin{equation}
|a|\leq Y^{\alpha\mu}\quad \text{for some} \ \mu, \text{and for all} \ a\in\mathfrak{A}.
\end{equation}
Then there exists a real constant $\beta_{\kappa}>2$ such that, for any real numbers $u$ and $v$ satisfying
\begin{equation*}
\alpha^{-1}<u<v,\qquad \beta_{\kappa}<\alpha v,
\end{equation*}
we have
\begin{equation*}
\#\{P_r:\ P_r\in\mathfrak{A}\}\gg Y\prod_{p<Y^{1/v}}\Big(1-\frac{\omega(p)}{p}\Big),
\end{equation*}
where
\begin{equation*}
r>\alpha\mu u-1+\frac{\kappa}{f_{\kappa}(\alpha v)}\int_{1}^{v/u}F_{\kappa}(\alpha v-s)\left(1-\frac{u}{v}s\right)\frac{d s}{s}.
\end{equation*}
\end{lem}
We need to verify the conditions (2.11)-(2.15).
For $p>C$, we have
\begin{equation*}
\omega(p)\leq5.
\end{equation*}
Then we obtain that (2.11) holds. Taking logarithms, we rewrite (2.12) as requiring that
\begin{equation*}
\sum_{z_1\leq p<z}\sum_{i=1}^{\infty}\frac{\omega(p)^i}{ip^i}\leq \kappa\log\log z-\kappa\log\log z_1+\frac{B_1}{\log z_1},
\end{equation*}
for $z_1\geq C$. The main contribution comes from $\sum\limits_{z_1\leq p<z}\frac{\omega(p)}{p}$. We have
\begin{equation*}
\begin{split}
\sum_{z_1\leq p<z}\frac{\omega(p)}{p}&\leq \sum_{z_1\leq p<z}\frac{4}{p}+\sum_{z_1\leq p<z}\frac{C}{p^{\frac{3}{2}}}\\
&\leq 4\log\log z-4\log\log z_1+\frac{B_1}{\log z_1}.
\end{split}
\end{equation*}
For the error term, we have
\begin{equation*}
\sum_{i=2}^{\infty}\sum_{z_1\leq p<z}\frac{\omega(p)^i}{ip^i}\leq \sum_{i=2}^{\infty}\sum_{z_1\leq p<z}\Big(\frac{5}{p}\Big)^i\ll\frac{1}{\log z_1}.
\end{equation*}
Now we obtain that (2.12) holds for $\kappa=4$. Using Lemma 3, we see that
\begin{equation*}
\sum_{\stackrel{d<{Y^{\alpha}}(\log Y)^{-A_3}}{(d,\bar{\mathfrak{P}})=1}}\mu^2(d)4^{\nu(d)}|R_d|\ll Y^{\frac{\alpha}{2}+\frac{2}{3}+\varepsilon}+Y^{\alpha+\frac{1}{3}+\varepsilon}+Y^{2\alpha+\varepsilon}.
\end{equation*}
Now it suffices to have the following
\begin{equation*}
Y^{\frac{\alpha}{2}+\frac{2}{3}+\varepsilon}+Y^{\alpha+\frac{1}{3}+\varepsilon}+Y^{2\alpha+\varepsilon}\ll Y.
\end{equation*}
Thus the condition (2.13) is satisfied, provided that we choose $\alpha<\frac{1}{2}$. The condition (2.14) holds since we restrict $\mathbf{x}\equiv\mathbf{z}\,(\text{mod}\,D)$. For the condition (2.15), we see that any $\mu>\frac{4}{\alpha}$ is admissible.

Arguing similarly as in Section 6 of Liu and Sarnak \cite{LS}, we obtain that one may take any value of $r$ larger than
\begin{equation*}
m(\lambda):=(3+4\log \beta_4)+(8-\frac{4}{\beta_4}+\log\beta_4)\lambda-4\log\lambda-\lambda\log\lambda.
\end{equation*}
Note that
\begin{equation*}
\beta_4=9.0722\dots.
\end{equation*}
by Appendix III in \cite{DHR}. Then we have
\begin{equation*}
\min_{0<\lambda<\beta_4}m(\lambda)=m(0.4147489\dots)=19.7559\dots.
\end{equation*}
Thus we prove the lemma.
\end{proof}
\subsection{The proof of Theorem 1}
In this section, we prove the Zariski density of the almost prime points. It suffices to show that given $\varepsilon>0$ and any $[\boldsymbol{\xi}]\in U_1$, there exists $B\in\mathbb{N}$ sufficiently large and a fixed convex set $\mathcal{R}^{(0)}\subset\mathbb{R}^3$, such that for $\mathbf{y}\in B^{\frac{1}{3}}\mathcal{R}^{(0)}$, we have
\begin{equation}
\Big|\frac{F_i(\mathbf{y})}{B}-\xi_i\Big|< \varepsilon,
\end{equation}
for $i=0,\dots,3$.
For such a given $\boldsymbol{\xi}$, we write
\begin{equation*}
\begin{split}
&\zeta_0=\frac{1}{4}(\xi_0+\xi_1+\xi_2+\xi_3),\\
&\zeta_1=\frac{1}{4}(\xi_0+\xi_1-\xi_2-\xi_3),\\
&\zeta_2=\frac{1}{4}(\xi_0-\xi_1+\xi_2-\xi_3),\\
&\zeta_3=\frac{1}{4}(\xi_0-\xi_1-\xi_2+\xi_3).
\end{split}
\end{equation*}
By (2.2), we see that it suffices to prove
\begin{equation*}
\Big|\frac{H_i(\mathbf{y})}{B}-\zeta_i\Big|<\frac{1}{4} \varepsilon,
\end{equation*}
where all of $H_i(\mathbf{y})$ are defined as in (2.1). Recall that $\boldsymbol{\xi}$ does not lie in any line of $S_1$. Thus we get
\begin{equation*}
\begin{split}
&\zeta_0\zeta_2+3\zeta_1\zeta_3=\frac{1}{4}(\xi_0^2+\xi_2^2-\xi_0\xi_2-\xi_1^2-\xi_3^2+\xi_1\xi_3)\neq0,\\
&\zeta_2\zeta_3-\zeta_0\zeta_1=\frac{1}{4}(\xi_2\xi_3-\xi_0\xi_1)\neq0,\\
&\zeta_0^2+3\zeta_3^2\neq0.
\end{split}
\end{equation*}
Furthermore, we write
\begin{equation*}
\begin{split}
&\gamma_0=\frac{\sqrt[3]{4}}{6}\zeta_0^{\frac{1}{3}}(\zeta_0\zeta_2+3\zeta_1\zeta_3)^{-\frac{1}{3}}(\zeta_2\zeta_3-\zeta_0\zeta_1)^{-\frac{1}{3}}(\zeta_0^2+3\zeta_3^2)^{-\frac{1}{3}},\\
&\gamma_1=-\gamma_0(3(\zeta_0\zeta_2+3\zeta_1\zeta_3)),\\
&\gamma_2=-\gamma_0(3(\zeta_2\zeta_3-\zeta_0\zeta_1)),\\
&\gamma_3=-\gamma_0(\zeta_0^2+3\zeta_3^2).
\end{split}
\end{equation*}
Suppose $0<\delta<1$ is a sufficiently small constant. Fix
\begin{equation*}
\big|y_j-\gamma_jB^\frac{1}{3}\big|< \delta B^{\frac{1}{3}},
\end{equation*}
for $j=1,2,3$. Then we obtain
\begin{equation*}
\Big|\frac{H_i(\mathbf{y})}{B}-\zeta_i\Big|<\delta f_i(\boldsymbol{\xi}),
\end{equation*}
where each of $f_i( \boldsymbol{\xi})$ is some function in $\boldsymbol{\xi}$. Then there exists $\delta$ sufficiently small, such that
\begin{equation*}
\delta f_i( \boldsymbol{\xi})<\frac{1}{4}\varepsilon.
\end{equation*}
Fix
\begin{equation*}
\mathcal{R}^{(0)}=\{\mathbf{y}\in\mathbb{R}^3:\ y_j\in \big((\gamma_j-\delta),(\gamma_j+\delta)\big),\,\text{for}\ j=1,2,3\}.
\end{equation*}
Then for $\mathbf{y}\in B^{\frac{1}{3}}\mathcal{R}^{(0)}$, we see that (2.16) holds. By Lemma 4, we get
\begin{equation*}
\#\{\mathbf{y}\in B^{\frac{1}{3}}\mathcal{R}^{(0)}\cap\Psi:\,F(\mathbf{y})=P_{20}\}\gg B(\log B)^{-4}.
\end{equation*}
Recall that for $\mathbf{y}\in \Psi$, we have $(F_i(\mathbf{y}),D)=1$. Consequently, we obtain that $(F_0(\mathbf{y}),\dots,F_3(\mathbf{y}))\in\mathbb{Z}^4_{\text{prim}}$. Then the proof is concluded.
\section{The Cayley cubic surface}
\subsection{The circle method}
Suppose that
\begin{equation*}
F(t_0,t_1,t_2,t_3)=\beta_0t_0+\beta_1t_1+\beta_2t_2+\beta_3t_3
\end{equation*}
is a linear form, with the coefficients $\beta_j\in \{-1,1\}$.
Suppose $\eta_0$, $\dots$, $\eta_3$ are fixed positive real numbers, satisfying
\begin{equation*}
F(\eta_0,\eta_1,\eta_2,\eta_3)=\beta_0\eta_0+\beta_1\eta_1+\beta_2\eta_2+\beta_3\eta_3=0.
\end{equation*}
Write
\begin{equation*}
I_j=[\eta_j B^{\frac{1}{3}}-B^{\frac{1}{3}}(\log B)^{-1},\eta_jB^{\frac{1}{3}}+B^{\frac{1}{3}}(\log B)^{-1}],
\end{equation*}
for $j=0,\dots,3$, where $B$ is a sufficiently large parameter. Furthermore, set
\begin{equation*}
R(B)=\sum_{\stackrel{p_j\in I_j}{F(p_0,p_1,p_2,p_3)=0}}(\log p_0)(\log p_1)(\log p_2)(\log p_3).
\end{equation*}
In this section, we use the circle method to prove the following lemma.
\begin{lem}
For any $A>0$, we have
\begin{equation*}
R(B)=J(B)\mathfrak{S}+O(B(\log B)^{-A}),
\end{equation*}
where $J(B)$ is the number of solutions of
\begin{equation*}
F(m_0,m_1,m_2,m_3)=0
\end{equation*}
with $m_j\in I_j $ and
\begin{equation*}
\mathfrak{S}=\prod_{p}\Big(1+\frac{1}{(p-1)^3}\Big).
\end{equation*}
Moreover, we have $J(B)\gg B(\log B)^{-3}$ and $\mathfrak{S}\gg1$.
\end{lem}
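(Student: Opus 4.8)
The plan is to run the Hardy--Littlewood circle method in its classical Vinogradov three-primes form. Write $N = B^{1/3}$ and, for $j = 0,\dots,3$, put $S_j(\alpha) = \sum_{p\in I_j}(\log p)\,e(\alpha p)$, so that by orthogonality
\[
R(B) = \int_0^1 S_0(\beta_0\alpha)\,S_1(\beta_1\alpha)\,S_2(\beta_2\alpha)\,S_3(\beta_3\alpha)\,d\alpha .
\]
Fix $A>0$, set $P_0 = (\log B)^{C}$ for a constant $C = C(A)$ to be chosen, and let the major arcs $\mathfrak{M}$ be the disjoint union, over $q\le P_0$ and $(a,q)=1$, of the intervals $\{\alpha:\ |\alpha - a/q|\le P_0 N^{-1}\}$, with $\mathfrak{m}$ the complementary minor arcs inside $[P_0N^{-1},\,1+P_0N^{-1}]$; disjointness holds once $N>2P_0^3$.

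On the minor arcs I would combine Dirichlet's approximation theorem with Vaughan's identity to obtain $S_j(\beta_j\alpha)\ll N(\log N)^{-A-1}$ for every $\alpha\in\mathfrak{m}$: each $I_j$ is an interval of length $\asymp N(\log B)^{-1}$, so $S_j(\beta_j\alpha)$ is a difference of two complete prime exponential sums, each controlled by the standard bound $\ll(xq^{-1/2}+x^{4/5}+x^{1/2}q^{1/2})(\log x)^4$ valid when $|\beta_j\alpha - b/q|\le q^{-2}$, the denominator $q$ being confined to the range $P_0<q\le NP_0^{-1}$ because $\alpha$ lies on a minor arc. Bounding $\int_{\mathfrak{m}}|S_0S_1S_2S_3|\,d\alpha$ by $\big(\sup_{\mathfrak{m}}|S_0(\beta_0\alpha)|\big)\big(\sup_\alpha|S_1(\beta_1\alpha)|\big)\int_0^1|S_2(\beta_2\alpha)S_3(\beta_3\alpha)|\,d\alpha$, and then using $\sup_\alpha|S_1(\beta_1\alpha)|\ll N$, Cauchy--Schwarz and $\int_0^1|S_j(\alpha)|^2\,d\alpha = \sum_{p\in I_j}(\log p)^2\ll N\log N$, one finds the minor-arc contribution is $O(B(\log B)^{-A})$. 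This is the step demanding the most care, but since the intervals $I_j$ are shortened only by a logarithmic factor no input beyond the classical machinery is required; the other mildly delicate point is the uniformity of the Siegel--Walfisz estimate over the entire major-arc range.

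On the major arcs, with $\alpha = a/q + \theta$, $q\le P_0$, $|\theta|\le P_0 N^{-1}$, I would sort the primes into residue classes modulo $q$ and apply Siegel--Walfisz together with partial summation to get
\[
S_j(\beta_j\alpha) = \frac{\mu(q)}{\varphi(q)}\,v_j(\theta) + O\big(N\exp(-c\sqrt{\log N})\big),
\qquad v_j(\theta) := \int_{I_j} e(\beta_j\theta t)\,dt ,
\]
the identity $\sum_{(r,q)=1}e(\beta_j a r/q) = \mu(q)$ being exact here because $(\beta_j a,q) = (a,q) = 1$ as $\beta_j\in\{-1,1\}$. Multiplying out the four factors, integrating over $\mathfrak{M}$, and summing over $a$ and $q$ (using $\mu(q)^4 = \mu^2(q)$ and that the $\theta$-integral is independent of $a$) produces the main term
\[
\sum_{q\le P_0}\frac{\mu^2(q)}{\varphi(q)^3}\int_{|\theta|\le P_0 N^{-1}}\prod_{j=0}^3 v_j(\theta)\,d\theta ,
\]
all accompanying errors being $O(B(\log B)^{-A})$ once $C$ is large (the dominant one, from the cross terms, is $\ll N^3 P_0^{O(1)}\exp(-c\sqrt{\log N})$). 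Since $|v_j(\theta)|\le\min\{|I_j|,(\pi|\theta|)^{-1}\}$, extending the $\theta$-integral to $\mathbb{R}$ and the $q$-sum to infinity costs only $O(B(\log B)^{-A})$, leaving $\mathfrak{S}\,\mathfrak{J}$ where $\mathfrak{J} = \int_{\mathbb{R}}\prod_{j=0}^3 v_j(\theta)\,d\theta$ and, by multiplicativity, $\mathfrak{S} = \sum_{q\ge 1}\mu^2(q)\varphi(q)^{-3} = \prod_p\big(1 + (p-1)^{-3}\big)$.

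It remains to compare $\mathfrak{J}$ with $J(B)$ and to establish the lower bounds. Since $\beta_3\in\{-1,1\}$, the equation $F(\mathbf{m}) = 0$ forces $m_3 = -\beta_3(\beta_0 m_0 + \beta_1 m_1 + \beta_2 m_2)$, so $J(B)$ counts the integer points of the bounded convex region
\[
\mathcal{K} = \big\{(t_0,t_1,t_2)\in I_0\times I_1\times I_2 :\ -\beta_3(\beta_0 t_0 + \beta_1 t_1 + \beta_2 t_2)\in I_3\big\}\subset\mathbb{R}^3,
\]
whose volume equals $\mathfrak{J}$ by Fourier inversion; the standard estimate for integer points in a convex body gives $J(B) = \mathfrak{J} + O\big((N/\log B)^2\big)$, which is $O(B(\log B)^{-A})$ for $B$ large. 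Finally, because $\beta_0\eta_0 + \dots + \beta_3\eta_3 = 0$ with all $\eta_j > 0$, the point $(\eta_0 N,\eta_1 N,\eta_2 N)$ is the centre of $\mathcal{K}$ and a box of side $\asymp N(\log B)^{-1}$ about it lies in $\mathcal{K}$, so $\mathfrak{J}\gg N^3(\log B)^{-3} = B(\log B)^{-3}$; hence $J(B)\gg B(\log B)^{-3}$, while $\mathfrak{S}\ge 1$ gives $\mathfrak{S}\gg 1$. Reassembling the major and minor arcs yields $R(B) = J(B)\,\mathfrak{S} + O(B(\log B)^{-A})$, as claimed.
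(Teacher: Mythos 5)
Your proof is correct, and while its skeleton matches the paper's (circle method with major arcs at height $(\log B)^{O(1)}$, a sup-times-$L^2$-times-$L^2$ H\"older bound on the minor arcs, a Siegel--Walfisz input on the major arcs, and the same singular series $\sum_q \mu^2(q)\varphi(q)^{-3}$), the key technical inputs are genuinely different. The paper treats each $S_j$ as an exponential sum over primes in a short interval and imports short-interval technology: the Perelli--Pintz form of Siegel--Walfisz on the major arcs and Zhan's theorem for the pointwise minor-arc bound. You instead exploit the fact that $|I_j|\asymp B^{1/3}(\log B)^{-1}$ falls short of the full range only by a logarithmic factor, so that $S_j$ is a difference of two complete sums of length $\asymp B^{1/3}$; the classical Vaughan--Vinogradov bound with $P_0<q\le NP_0^{-1}$ and the classical Siegel--Walfisz theorem applied at the two endpoints of $I_j$ then suffice, the loss of a few powers of $\log$ being absorbed into the choice of $C(A)$. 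This makes your argument more elementary and self-contained, whereas the paper's cited results are stated in exactly the form needed and would survive a passage to genuinely short intervals. The other divergence is minor: the paper keeps the discrete sums $\sum_{m_j\in I_j}e(\lambda F(\mathbf{m}))$ and recovers $J(B)$ exactly by extending the $\lambda$-integral to $[-\tfrac12,\tfrac12]$ and using orthogonality over $\mathbb{Z}$, while you pass to the continuous singular integral $\mathfrak{J}$ and compare it with $J(B)$ by a lattice-point count in a convex polytope; both routes cost only negligible errors. Your lower bound for $J(B)$ (a box of side $\asymp B^{1/3}(\log B)^{-1}$ inside $\mathcal{K}$) is the same argument as the paper's device of fixing $m_1,m_2,m_3$ in the middle thirds of their intervals so that $m_0$ is forced into $I_0$.
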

\begin{proof}
Write
\begin{equation*}
L=\log B,\qquad P=L^D, \qquad Q=B^{\frac{1}{3}}P^{-3},
\end{equation*}
where $D$ is a sufficiently large parameter to be chosen later. Furthermore, denote
\begin{equation*}
S_j(\alpha)=\sum_{p_j\in I_j}\log p_j e(\beta_jp_j\alpha ).
\end{equation*}
Then we have
\begin{equation*}
R(B)=\int_{\frac{1}{Q}}^{1+\frac{1}{Q}}S_0(\alpha)S_1(\alpha)S_2(\alpha)S_3(\alpha)d \alpha.
\end{equation*}
By Dirichlet's lemma on rational approximation, each $\alpha\in (\frac{1}{Q},1+\frac{1}{Q}]$ may be written in the form
\begin{equation*}
\alpha=\frac{a}{q}+\lambda,\quad |\lambda|<\frac{1}{qQ},
\end{equation*}
for some integers $a$, $q$ with $1\leq a\leq q\leq Q$ and $(a,q)=1$. Now we define the sets of major and minor arcs as follows:
\begin{equation*}
\mathfrak{M}=\bigcup_{q\leq P}\bigcup_{\stackrel{1\leq a\leq q}{(a,q)=1}}\Big[\frac{a}{q}-\frac{1}{qQ},\frac{a}{q}+\frac{1}{qQ}\Big], \quad \mathfrak{m}=\big(\frac{1}{Q},1+\frac{1}{Q}\big]\setminus \mathfrak{M}.
\end{equation*}
Then
\begin{equation*}
R(B)=\int_{\mathfrak{M}}S_0(\alpha)S_1(\alpha)S_2(\alpha)S_3(\alpha)d \alpha+\int_{\mathfrak{m}}S_0(\alpha)S_1(\alpha)S_2(\alpha)S_3(\alpha)d \alpha.
\end{equation*}
We first estimate the contribution of the integral over the major arcs. For any $\alpha\in\mathfrak{M}$, there exist integers $a$ and $q$ such that
\begin{equation*}
\alpha=\frac{a}{q}+\lambda,\quad 1\leq q \leq P,\quad (a,q)=1 \quad \text{and}\quad |\lambda|<\frac{1}{qQ}.
\end{equation*}
Hence
\begin{equation*}
\begin{split}
S_j(\alpha)&=\sum_{p_j\in I_j}\log p_j e\Big(\frac{\beta_jap_j}{q}\Big)e(\beta_j\lambda p_j)\\
&=\frac{1}{\varphi(q)}\sum_{\chi\  \text{mod}\  q}\sum_{\stackrel{h=1}{(h,q)=1}}^{q}e\Big(\frac{\beta_jah}{q}\Big)\bar{\chi}(h)\sum_{p_j\in I_j}\log p_j\chi(p_j)e(\beta_j\lambda p_j).
\end{split}
\end{equation*}
Let
\begin{equation*}
W_j(\chi,\lambda)=\sum_{m_j\in I_j}(\Lambda(m_j)\chi(m_j)-\delta_{\chi})e(\beta_j\lambda m_j),
\end{equation*}
and
\begin{equation*}
\widehat{W}_j(\chi,\lambda)=\sum_{p_j\in I_j}\log p_j \chi(p_j)e(\beta_j\lambda p_j)-\sum_{m_j\in I_j}\delta_{\chi}e(\beta_j\lambda m_j),
\end{equation*}
where $\delta_{\chi}=1$ or $0$ according as $\chi$ is principal or not. Then we have
\begin{equation}
W_j(\chi,\lambda)-\widehat{W}_j(\chi,\lambda)=\sum_{k\geq2}\sum_{p_j^k\in I_j}\log p_j \chi(p_j^k)e(\beta_j\lambda p_j^k)\ll B^{\frac{1}{6}}L.
\end{equation}
Thus
\begin{equation}
\begin{split}
&S_j(\alpha)-\frac{\mu(q)}{\varphi(q)}\sum_{m_j\in I_j}e(\beta_j\lambda m_j)\\
=&\frac{1}{\varphi(q)}\sum_{\chi\,\text{mod}\,q}\sum_{\stackrel{h=1}{(h,q)=1}}^q e\Big(\frac{\beta_jah}{q}\Big)\bar{\chi}(h)\widehat{W}_j(\chi,\lambda)\\
=&\frac{1}{\varphi(q)}\sum_{\chi\,\text{mod}\,q}\sum_{\stackrel{h=1}{(h,q)=1}}^q e\Big(\frac{\beta_jah}{q}\Big)\bar{\chi}(h)(\widehat{W}_j(\chi,\lambda)-W_j(\chi,\lambda))\\
&+\frac{1}{\varphi(q)}\sum_{\chi\,\text{mod}\,q}\sum_{\stackrel{h=1}{(h,q)=1}}^q e\Big(\frac{\beta_jah}{q}\Big)\bar{\chi}(h)W_j(\chi,\lambda).
\end{split}
\end{equation}
The main tool here is a short intervals version of Siegel-Walfisz theorem. Using (6) in Perelli and Pintz \cite{P}, we see that
\begin{equation*}
\sum_{m_j\in I_j}\Lambda(m_j)\chi(m_j)=\sum_{m_j\in I_j}\delta_{\chi}+O(B^{\frac{1}{3}}L^{-1-5D}).
\end{equation*}
Therefore, integration by parts gives
\begin{equation*}
\begin{split}
W_j(\chi,\lambda)=&\int_{I_j}e(\beta_j\lambda u)d\big( \sum_{m_j\leq u,m_j\in I_j}(\Lambda(m_j)\chi(m_j)-\delta_{\chi})\big)\\
\ll& \big|\sum_{m_j\in I_j}(\Lambda(m_j)\chi(m_j)-\delta_{\chi})\big|\\
&+\Big|\lambda\int_{I_j}e(\beta_j\lambda u)\big( \sum_{m_j\leq u,m_j\in I_j}(\Lambda(m_j)\chi(m_j)-\delta_{\chi})\big) du\Big|\\
\ll& (1+|\lambda|B^{\frac{1}{3}}L^{-1})B^{\frac{1}{3}}L^{-1-5D}.
\end{split}
\end{equation*}
Thus we obtain
\begin{equation}
W_j(\chi,\lambda)\ll (1+|\lambda|B^{\frac{1}{3}}L^{-1})B^{\frac{1}{3}}L^{-1-5D}\ll B^{\frac{1}{3}}L^{-2-2D}.
\end{equation}
Combining (3.1), (3.2) and (3.3), we get
\begin{equation*}
S_j(\alpha)=\frac{\mu(q)}{\varphi(q)}\sum_{m_j\in I_j}e(\beta_j\lambda m_j)+O(B^{\frac{1}{3}}L^{-2D}).
\end{equation*}
We have similar results for $S_1(\alpha)$, $S_2(\alpha)$ and $S_3(\alpha)$. Thus we obtain
\begin{equation}
\begin{split}
&\int_{\mathfrak{M}}S_0(\alpha)S_1(\alpha)S_2(\alpha)S_3(\alpha)d \alpha \\
&-\sum_{q\leq P}\frac{{\mu}^2(q)}{{\varphi}^4(q)}\sum_{\stackrel{a=1}{(a,q)=1}}^{q}\int_{-\frac{1}{qQ}}^{\frac{1}{qQ}}\sum_{m_j\in I_j}e(\lambda F(m_0,m_1,m_2,m_3))d\lambda\\
&\ll BL^{-D}.
\end{split}
\end{equation}
Note that
\begin{equation*}
\begin{split}
&\int_{-\frac{1}{2}}^{\frac{1}{2}}\sum_{m_j\in I_j}e(\lambda F(m_0,m_1,m_2,m_3))d\lambda\\
&-\int_{-\frac{1}{qQ}}^{\frac{1}{qQ}}\sum_{m_j\in I_j}e(\lambda F(m_0,m_1,m_2,m_3))d\lambda\ll BL^{-D}.
\end{split}
\end{equation*}
Inserting this into (3.4), we obtain
\begin{equation}
\int_{\mathfrak{M}}S_0(\alpha)S_1(\alpha)S_2(\alpha)S_3(\alpha)d \alpha=J(B)\mathfrak{S}(P)+O(BL^{-D}),
\end{equation}
where
\begin{equation*}
\mathfrak{S}(P)=\sum_{q=1}^{P}\frac{{\mu(q)}^2}{{\varphi(q)}^3}.
\end{equation*}
Now we estimate the contribution of the integral over the minor arcs. For any $\alpha\in\mathfrak{m}$, there exist integers $a$ and $q$ such that
\begin{equation*}
P\leq q\leq Q,\quad (a,q)=1 \quad \text{and} \quad \Big| \alpha-\frac{a}{q}\Big|<\frac{1}{qQ}.
\end{equation*}
Using the non-trivial upper bound for the exponential sums over primes in short intervals, we get
\begin{equation*}
S_0(\alpha)\ll B^{\frac{1}{3}}(\log B)^{-A},
\end{equation*}
provided $D$ is chosen to be sufficiently large. Such a result can be found in several references, for example see Theorem 2 in Zhan \cite{Z}. Also, we have the following mean-value estimate:
\begin{equation*}
\int_{\frac{1}{Q}}^{1+\frac{1}{Q}}|S_j(\alpha)|^2 d\alpha \ll B^{\frac{1}{3}}.
\end{equation*}
By Cauchy's inequality, we obtain
\begin{equation*}
\int_{\frac{1}{Q}}^{1+\frac{1}{Q}}|S_2(\alpha)S_3(\alpha)| d\alpha \ll B^{\frac{1}{3}}.
\end{equation*}
Thus we have
\begin{equation}
\int_{\mathfrak{m}}S_0(\alpha)S_1(\alpha)S_2(\alpha)S_3(\alpha)d \alpha\ll BL^{-A}.
\end{equation}
Combining (3.5) and (3.6), we get
\begin{equation*}
R(B)=J(B)\mathfrak{S}(P)+O(BL^{-A}).
\end{equation*}
For the singular series, we have
\begin{equation*}
\Big|\frac{{\mu(q)}^2}{{\varphi(q)}^3}\Big|\leq \frac{1}{{\varphi(q)}^3}.
\end{equation*}
Note that
\begin{equation*}
\mathfrak{S}=\prod_{p}\Big(1+\frac{1}{(p-1)^3}\Big)=\sum_{q=1}^{\infty}\frac{{\mu(q)}^2}{{\varphi(q)}^3}\gg 1.
\end{equation*}
Thus we obtain
\begin{equation*}
\mathfrak{S}(P)-\mathfrak{S}\ll\sum_{q>P}\frac{1}{{\varphi(q)}^3}\ll L^{-D}.
\end{equation*}
Since $J(B)\ll B(\log B)^{-3}$, then we have
\begin{equation*}
R(B)=J(B)\mathfrak{S}+O(B(\log B)^{-A}).
\end{equation*}
Now we establish the lower bound for $J(B)$. For $j=1,2,3$, we fix $m_j\in[\eta_j B^{\frac{1}{3}}-\frac{1}{3}B^{\frac{1}{3}}(\log B)^{-1},\eta_jB^{\frac{1}{3}}+\frac{1}{3}B^{\frac{1}{3}}(\log B)^{-1}]$. Recall that
\begin{equation*}
\beta_0\eta_0+\beta_1\eta_1+\beta_2\eta_2+\beta_3\eta_3=0,
\end{equation*}
thus we have
\begin{equation*}
\beta_1 m_1+\beta_2 m_2+\beta_3 m_3\in[-\beta_0\eta_0 B^{\frac{1}{3}}-B^{\frac{1}{3}}(\log B)^{-1},-\beta_0\eta_0B^{\frac{1}{3}}+B^{\frac{1}{3}}(\log B)^{-1}].
\end{equation*}
Since
\begin{equation*}
I_0=[\eta_0 B^{\frac{1}{3}}-B^{\frac{1}{3}}(\log B)^{-1},\eta_0B^{\frac{1}{3}}+B^{\frac{1}{3}}(\log B)^{-1}],
\end{equation*}
then we obtain
\begin{equation*}
J(B)\gg B(\log B)^{-3}.
\end{equation*}
Therefore we prove the lemma.
\end{proof}
\subsection{The universal torsor}
In this section, we use a passage to the universal torsor for the Cayley cubic surface. Details can be found in Section 2 of Heath-Brown \cite{Hb}. It is shown that the universal torsor for the Cayley cubic surface is an open subvariety in
\begin{equation*}
\mathbb{A}^{13}=\text{Spec}\mathbb{Z}[v_{01},v_{02},v_{03},y_0,y_1,y_2,y_3,z_{01},z_{02},z_{03},z_{12},z_{13},z_{23}],
\end{equation*}
defined by six equations of the form
\begin{equation*}
z_{ik}z_{il}y_j+z_{jk}z_{jl}y_i=z_{ij}v_{ij}
\end{equation*}
and three equations of the form
\begin{equation*}
v_{ij}v_{ik}=z_{il}^2y_jy_k-z_{jk}^2y_iy_l,
\end{equation*}
where
\begin{equation*}
z_{ij}=z_{ji},\quad v_{ij}=v_{ji},\quad and \quad v_{ij}=-v_{kl}.
\end{equation*}
In fact, we are not working with the full universal torsor. We need the following lemma, which is essentially a restatement of Lemma 1 in Heath-Brown \cite{Hb}.
\begin{lem}
Let $[\mathbf{x}]\in U_2$ be a primitive integral solution. Then either $\mathbf{x}$ or $-\mathbf{x}$ takes the form
\begin{equation*}
\begin{split}
&x_0=z_{01}z_{02}z_{03}y_1y_2y_3,\\
&x_1=z_{01}z_{12}z_{13}y_0y_3y_4,\\
&x_2=z_{02}z_{12}z_{23}y_1y_3y_4,\\
&x_3=z_{03}z_{13}z_{23}y_0y_1y_2,
\end{split}
\end{equation*}
with non-zero integer variables $y_i$ and positive integer variables $z_{ij}$ constrained by the conditions
\begin{equation*}
\text{gcd}(y_i,y_j)=1,
\end{equation*}
\begin{equation*}
\text{gcd}(y_i,z_{ij})=1,
\end{equation*}
\begin{equation*}
\text{gcd}(z_{ab},z_{cd})=1,
\end{equation*}
for $\{a,b\}$, $\{c,d\}$ distinct, and satisfying the equation
\begin{equation*}
z_{12}z_{13}z_{23}y_0+z_{02}z_{03}z_{23}y_1+z_{01}z_{03}z_{13}y_2+z_{01}z_{02}z_{12}y_3=0.
\end{equation*}
Moreover, none of $z_{12}z_{13}z_{23}y_0+z_{02}z_{03}z_{23}y_1$, $z_{12}z_{13}z_{23}y_0+z_{01}z_{03}z_{13}y_2$ or $z_{12}z_{13}z_{23}y_0+z_{01}z_{02}z_{12}y_3$ may vanish. Conversely, if $y_i$ and $z_{ij}$ are as above, then the vector $\mathbf{x}$ taking the above form, will be a primitive integral solution lying on $U_2$.
\end{lem}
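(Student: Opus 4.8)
The plan is to carry out the descent to the universal torsor explicitly, adapting Heath-Brown's argument (Lemma~1 of \cite{Hb}), and to keep track of the coprimalities by a local analysis at each prime. The geometric input is the following. Since $U_2$ is the complement of the nine lines on $S_2$, any $[\mathbf{x}]\in U_2$ has all four coordinates non-zero, because $\{x_i=0\}\cap S_2$ is a union of three of the lines $x_j=x_k=0$; and $x_i+x_j\neq0$ for every pair $\{i,j\}$, because on $S_2$ the relation $1/x_0+1/x_1+1/x_2+1/x_3=0$ forces $x_k+x_l=0$ as soon as $x_i+x_j=0$, placing $[\mathbf{x}]$ on the line $x_i+x_j=x_k+x_l=0$. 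Passing to $-\mathbf{x}$ if necessary is a harmless sign normalisation and accounts for the dichotomy in the statement.

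For the forward direction I would build the torsor variables prime by prime. Fix a prime $p$, write $v_p$ for the $p$-adic valuation, and put $a_i=v_p(x_i)$. The Cayley equation reads $\sum_{m}\prod_{n\neq m}x_n=0$, so its $m$-th term has valuation $\sigma-a_m$ with $\sigma=a_0+a_1+a_2+a_3$; for the four terms to cancel, the smallest valuation $\sigma-\max_i a_i$ must be attained by at least two of the terms, that is, $\max_i a_i$ must occur at two or more indices. Together with primitivity, which gives $\min_i a_i=0$, the only valuation patterns at $p$ are, up to relabelling, $\{0,0,m,m\}$, $\{0,m,m,m\}$, and $\{0,a,m,m\}$ with $1\le a<m$. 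Each prescribes a \emph{unique} distribution of the power of $p$: in the first case $p^m$ goes into $z_{ij}$, where $\{i,j\}$ are the two maximal indices; in the second, $p^m$ goes into $y_i$, where $i$ is the unique zero index; in the third, $p^a$ goes into $y_i$ and $p^{m-a}$ into $z_{kl}$, where $i$ is the zero index and $\{k,l\}$ the two maximal ones. Running over all $p$ produces positive integers $z_{ij}$ and, after choosing the signs of the $y_i$ compatibly with those of $\mathbf{x}$, non-zero integers $y_i$ with $x_i=\prod_{j\neq i}y_j\prod_{\{a,b\}\ni i}z_{ab}$ as in the statement; and because no prime is ever assigned to two distinct $z$'s, to two distinct $y$'s, or simultaneously to some $y_i$ and to a $z_{ij}$ sharing the index $i$, the relations $\gcd(y_i,y_j)=1$, $\gcd(y_i,z_{ij})=1$, and $\gcd(z_{ab},z_{cd})=1$ for distinct pairs $\{a,b\}$, $\{c,d\}$ all hold automatically.

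Two routine verifications complete the forward direction. Substituting $x_i=\prod_{j\neq i}y_j\prod_{\{a,b\}\ni i}z_{ab}$ into the Cayley equation, every term of $\sum_m\prod_{n\neq m}x_n$ carries the common factor $z_{01}z_{02}z_{03}z_{12}z_{13}z_{23}(y_0y_1y_2y_3)^2$; dividing through by it leaves precisely $z_{12}z_{13}z_{23}y_0+z_{02}z_{03}z_{23}y_1+z_{01}z_{03}z_{13}y_2+z_{01}z_{02}z_{12}y_3=0$, which is the asserted torsor equation. Moreover, for each $j\in\{1,2,3\}$, writing $\{k,l\}=\{1,2,3\}\setminus\{j\}$ and factoring the common monomial out of $x_0+x_j$ and out of $x_k+x_l$ shows that $x_0+x_j$, $x_k+x_l$, and the $j$-th of the three displayed partial sums are non-zero rational multiples of one another; since $[\mathbf{x}]$ lies on none of the lines $x_0+x_j=x_k+x_l=0$, none of these partial sums can vanish.

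For the converse one reverses the construction: given $y_i\neq0$, $z_{ij}>0$ satisfying the stated coprimalities, the torsor equation, and the non-vanishing conditions, multiplying the torsor equation by the common monomial recovers the Cayley equation, so $[\mathbf{x}]\in S_2$; a prime-by-prime check using the coprimalities yields $\gcd(x_0,x_1,x_2,x_3)=1$; and the non-vanishing of the partial sums, together with $y_i\neq0$ and $z_{ij}>0$, forces $x_i\neq0$ and $x_i+x_j\neq0$ for all $i,j$, so $[\mathbf{x}]\in U_2$. The one genuinely delicate point is the valuation bookkeeping in the second paragraph --- enumerating the admissible patterns and checking that each reconstructs the monomials uniquely and with the correct signs --- but this uses no new idea and is exactly the content of Lemma~1 in \cite{Hb}.
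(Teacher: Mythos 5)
Your proposal is correct, and it is in fact more than the paper provides: the paper does not prove this lemma at all, but simply declares it to be ``essentially a restatement of Lemma 1 in Heath-Brown \cite{Hb}''. The descent you carry out --- the $p$-adic valuation analysis showing that primitivity forces $\min_i v_p(x_i)=0$ while cancellation in the Cayley equation forces $\max_i v_p(x_i)$ to be attained at least twice, followed by the case-by-case distribution of $p$-powers among the $y_i$ and $z_{ij}$ --- is precisely the argument of the cited lemma, and your treatment of the non-vanishing conditions (relating each partial sum to $x_0+x_j$ and, via the torsor equation, to $x_k+x_l$) and of the converse is sound. One remark: you have implicitly, and correctly, repaired the typographical errors in the statement (the stray $y_4$ in the expressions for $x_1$ and $x_2$) by working with the general formula $x_i=\prod_{j\neq i}y_j\prod_{\{a,b\}\ni i}z_{ab}$, which is the form actually used later in the paper when all $z_{ij}$ are set equal to $1$.
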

\subsection{The proof of the Theorem 2}
Recall that $[\boldsymbol{\xi}]\in U_2$. Thus we get $\xi_0\xi_1\xi_2\xi_3\neq0$, and $\xi_k+\xi_l\neq0$ for $k\neq l$. Now fix
\begin{equation*}
\eta_j=\Big|\frac{\sqrt[3]{\xi_1\xi_2\xi_3\xi_4}}{\xi_j}\Big|,\quad \beta_j=\text{sgn}\Big(\frac{\sqrt[3]{\xi_1\xi_2\xi_3\xi_4}}{\xi_j}\Big),
\end{equation*}
for $j=0,\dots,3$. Then we obtain that $\eta_0$, $\eta_1$, $\eta_2$ and $\eta_3$ are positive numbers, satisfying
\begin{equation*}
\beta_k\eta_k+\beta_l\eta_l\neq0,
\end{equation*}
for $k\neq l$, and
\begin{equation*}
\beta_0\eta_0+\beta_1\eta_1+\beta_2\eta_2+\beta_3\eta_3=0.
\end{equation*}
By Lemma 6, we obtain that for sufficiently large $B$, there exists a suitable positive constant $c$, such that there are at least $cB(\log B)^{-7}$ solutions to the equation
\begin{equation*}
\beta_0p_0+\beta_1p_1+\beta_2p_2+\beta_3p_3=0,
\end{equation*}
with $p_j\in I_j$. Among these solutions, there are at most $O(B^{\frac{2}{3}}(\log B)^{-2})$ ones with $\mu(p_0p_1p_2p_3)=0$. Thus we still have at least $c'B(\log B)^{-7}$ solutions satisfying $\mu(p_0p_1p_2p_3)\neq0$, where $c'$ is a positive constant. In Lemma 7, we fix $z_{01}=z_{02}=z_{03}=z_{12}=z_{13}=z_{23}=1$. Then the relations become
\begin{equation*}
y_0+y_1+y_2+y_3=0,
\end{equation*}
\begin{equation*}
\text{gcd}(y_i,y_j)=1,
\end{equation*}
and none if $y_0+y_1$, $y_0+y_2$ or $y_1+y_3$ may vanish. We also have
\begin{equation*}
\begin{split}
&x_0=y_1y_2y_3,\\
&x_1=y_0y_2y_3,\\
&x_2=y_0y_1y_3,\\
&x_3=y_0y_1y_2.
\end{split}
\end{equation*}
Now we set $y_j= \beta_jp_j$. Thus we obtain
\begin{equation*}
x_i=\prod\limits_{\stackrel{j=0}{j\neq i}}^{3}\beta_jp_j.
\end{equation*}
Now we get $\mathbf{x}\in \mathbb{Z}^4_{\text{prim}}$, $[\mathbf{x}]\in U_1$ and $\mathbf{x}$ primitive. For $x_0$, we have
\begin{equation*}
\Big|\frac{x_0}{B}-\beta_1\beta_2\beta_3\eta_1\eta_2\eta_3\Big|\ll (\log B)^{-1}.
\end{equation*}
Note that $\beta_1\beta_2\beta_3\eta_1\eta_2\eta_3=\xi_0$, then
\begin{equation*}
\Big|\frac{x_0}{B}-\xi_0\Big|\ll (\log B)^{-1}.
\end{equation*}
Arguing similarly as above, we get
\begin{equation*}
\Big|\frac{\mathbf{x}}{B}-\boldsymbol{\xi}\Big|\ll (\log B)^{-1}.
\end{equation*}
Thus the proof of Theorem 2 is concluded.
\bibliographystyle{plain}
\bibliography{saturation}

\end{document}